\theoremstyle{plain}
\newtheorem{theorem}{Theorem}
\newtheorem{proposition}[theorem]{Proposition}
\newtheorem{claim}[theorem]{Claim}
\newtheorem{lemma}[theorem]{Lemma}
\newtheorem{rlemma}{Lemma}
\theoremstyle{definition}
\newtheorem{definition}[theorem]{Definition}
\newtheorem{remark}[theorem]{Remark}
\newcommand{\forces}{\Vdash}
\newcommand{\hook}{\upharpoonright}
\newcommand{\tie}{^\frown}
\newcommand{\SCn}{\mathbb{SC}_n}
\title{The Implicitly Constructible Universe}
\author{Marcia J. Groszek\\Dartmouth College\\6188 Kemeny Hall\\Hanover NH 03755-3551\\  \\Joel David Hamkins\\City University of New York\\The CUNY Graduate Center \&\\College of Staten Island of CUNY}
\begin{document}

\maketitle

\begin{abstract}
We answer several questions posed by Hamkins and Leahy concerning the \emph{implicitly constructible universe} $Imp$, which they introduced in \cite{HL}.  Specifically, we show that it is relatively consistent with ZFC that $Imp \models \neg \text{CH}$, that $Imp \neq \text{HOD}$, and that $Imp \models V \neq Imp$, or in other words, that $(Imp)^{Imp} \neq Imp$.
\end{abstract}

\section{Introduction}

The \emph{implicitly constructible universe}, denoted $Imp$, was defined by Hamkins and Leahy \cite{HL}:

\begin{definition}
For a transitive set $X$, a subset $S \subseteq X$ is \emph{implicitly definable} over $X$ if for some formula $\varphi(x_1, \dots, x_n)$ in the language of $ZFC$ with an additional one-place predicate symbol, and some parameters $a_1, \dots, a_n \in X$, the set $S$ is the unique subset of $X$ such that $$( X, \in, S) \models \varphi (a_1, \dots, a_n).$$
\end{definition}

\begin{definition}  $Imp$ is defined by iteratively applying the implicitly definable power set operation as follows.

$Imp_0 = \emptyset$;

$Imp_{\alpha + 1} = \{S \;|\; S \text{ is implicitly definable over }Imp_\alpha\}$ ($Imp_1 = \{\emptyset\}$);

$Imp_\lambda = \displaystyle\bigcup_{\alpha < \lambda} Imp_\alpha$ for limit $\lambda$.

$Imp = \displaystyle\bigcup_{\alpha \in OR} Imp_\alpha$.

\end{definition}

Hamkins and Leahy showed the following facts.

\begin{proposition}[Hamkins and Leahy \cite{HL}]  \label{prop-HL}

$Imp$ is an inner model of $ZF$, with $L \subseteq Imp \subseteq HOD$.

If $ZF$ is consistent, so is $ZFC + (Imp \neq L)$.

For $\alpha < \omega_1^L$, as a consequence of Shoenfield absoluteness, $Imp_\alpha = (Imp_\alpha)^L$; thus, $Imp_{\omega_1^L} = (Imp_{\omega_1^L})^L = L_{\omega_1^L}$.

\end{proposition}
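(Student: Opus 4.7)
For the inner-model claim, the plan is a simultaneous transfinite induction showing that each $Imp_\alpha$ is transitive, that $Imp_\alpha \subseteq Imp_{\alpha+1}$, and that $L_\alpha \subseteq Imp_\alpha$. The self-inclusion $Imp_\alpha \subseteq Imp_{\alpha+1}$ holds because every $x \in Imp_\alpha$ is implicitly defined over $Imp_\alpha$ by the formula $\forall y\,(y \in S \leftrightarrow y \in x)$ with parameter $x$; transitivity of $Imp_{\alpha+1}$ then follows because its members are subsets of $Imp_\alpha$. The inclusion $L_\alpha \subseteq Imp_\alpha$ rests on the observation that any definable subset of $L_\alpha$ is implicitly definable over $Imp_\alpha$, using $L_\alpha$ (itself definable from ordinals in $Imp_\alpha$) as a parameter. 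The inclusion $Imp \subseteq HOD$ is immediate from the parameter-free character of the recursion, and the $ZF$ axioms hold in $Imp$ by the standard inner-model template.

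For the consistency of $Imp \neq L$, I would force over a model of $V = L$ with a forcing that adjoins a single non-constructible object which becomes implicitly definable in the extension. The forcing must produce a generic uniquely identifiable in the extension by some property expressible over an appropriate $Imp_\alpha$; a Jensen-style rigid forcing producing a $\Pi^1_2$-singleton real, or a coding forcing whose generic is characterized by lightface uniqueness, will do. The resulting real lies in $Imp$ as the unique witness of its defining property, yet is not in $L$.

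For the absoluteness claim, I would induct on $\alpha < \omega_1^L$; the base and limit cases are immediate. At a successor, assume $Imp_\alpha = (Imp_\alpha)^L$; since $\alpha$ is countable in $L$, this common set is hereditarily countable in $L$ and coded by a real $e \in L$. For $S \subseteq Imp_\alpha$, the statement $S \in Imp_{\alpha+1}$ unfolds as $\exists (\varphi,\vec a)\,[(Imp_\alpha,\in,S) \models \varphi(\vec a) \wedge \forall T\,((Imp_\alpha,\in,T) \models \varphi(\vec a) \to T = S)]$, whose satisfaction clause is arithmetic in $e,S$ and whose uniqueness clause is $\Pi^1_1$ in $e,S$, giving overall complexity $\Sigma^1_2$ in parameters $e,S$. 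Shoenfield absoluteness then makes this equivalent between $V$ and $L$. Moreover, such an $S$ is a $\Sigma^1_2$-singleton with parameter in $L$, and applying Shoenfield to the corresponding uniqueness-existential supplies an $L$-witness which must equal $S$ by uniqueness in $V$; hence $S \in L$. Thus $Imp_{\alpha+1} = (Imp_{\alpha+1})^L$. The equality $Imp_{\omega_1^L} = L_{\omega_1^L}$ then combines the level-by-level $L_\alpha \subseteq Imp_\alpha$ with the fact that in $L$ each $Imp_\alpha$ for countable $\alpha$ is hereditarily countable and lies in $L_{\omega_1^L}$.

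The main obstacle is the middle claim: arranging genuine \emph{implicit} rather than merely parametric definability requires a specially tailored forcing whose generic is uniquely identifiable in the extension, and the cleanest way to do this is through a rigidity device built into the forcing. The remaining two parts reduce to bookkeeping and a routine Shoenfield complexity calculation.
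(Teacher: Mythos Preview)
The paper does not actually prove this proposition; it is quoted as a result of Hamkins and Leahy \cite{HL} and stated without argument. The only hint the paper gives is the remark following Proposition~\ref{prop-almosthomog} that Hamkins and Leahy establish the consistency of $Imp \neq L$ via Proposition~\ref{prop-unique} (a forcing with a unique generic forces $G \in Imp$), and this is precisely the rigidity device you propose for the middle clause. Your sketch is correct on all three points and aligns with the indications the paper does give: the inner-model bookkeeping is standard, the unique-generic/Jensen-singleton idea is the intended one, and the Shoenfield calculation is exactly what the phrase ``as a consequence of Shoenfield absoluteness'' is pointing to. One small sharpening you might note: since existential number quantifiers preserve $\Pi^1_1$, the predicate ``$S \in Imp_{\alpha+1}$'' is in fact $\Pi^1_1(e,S)$ rather than merely $\Sigma^1_2$, though your bound already suffices.
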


In this paper, we answer some questions posed by Hamkins and Leahy \cite{HL}.   These questions aim to separate  $Imp$ from $L$ and from $HOD$, both literally  (Hamkins and Leahy showed that we may have $Imp \neq L$, and we show here that we may have $Imp \neq HOD$) and in terms of their properties.  In particular, we show that (given the consistency of $ZF$):
\begin{enumerate}
\item  It is consistent that $Imp \models \neg CH$.  (Theorem~\ref{thm-notCH}.)
\item  It is consistent that $Imp \neq HOD$.  (Theorem~\ref{thm-ImpNotHOD}.)
\item  It is consistent that $Imp \models V \neq Imp$ (that is, $(Imp)^{Imp} \neq Imp$).  (Theorem~\ref{thm-ImpImpNotImp}.)
\end{enumerate}

$Imp$ is defined level-by-level, inductively, as is the constructible universe $L$.  An important distinction is that, given $L_\alpha$ and $S \subset L_\alpha$, whether $S \in L_{\alpha + 1}$ depends only on $L_\alpha$, while given $Imp_\alpha$ and $S \subset Imp_\alpha$, whether $S \in Imp_{\alpha + 1}$ depends on the power set of $Imp_\alpha$.  By the above results, despite the similarity of the definitions of $Imp$ and $L$, we see that $Imp$ is less like $L$ and more like $HOD$.

We leave open, among other things, the question of whether $Imp \models \neg AC$ is consistent.  As $(V = Imp) \Longrightarrow AC$, Theorem~\ref{thm-ImpImpNotImp} is relevant to this question.

\section{Preliminary Results and Notation}
\label{sec-prelim}

The following facts were noted by Hamkins and Leahy:

\begin{proposition}[Hamkins and Leahy \cite{HL}]
\label{prop-unique}  Suppose $M \models ZFC + V=L$.  If $\mathbb P$ is a forcing poset in $M$, and $G$ is the unique element of $M[G]$ that is $\mathbb P$-generic over $M$, then in $M[G]$ we must have $G \in Imp$, and therefore $(Imp)^{M[G]} = M[G]$.
\end{proposition}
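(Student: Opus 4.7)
The plan is to find an ordinal $\alpha$ and a formula $\varphi$ with parameters in $Imp_\alpha^{M[G]}$ that implicitly defines $G$, thereby placing $G$ in $Imp^{M[G]}$. The first ingredient I would establish is that $M \subseteq Imp^{M[G]}$. Since $M \models V=L$, we have $M = L^M$; since forcing does not add constructible sets, $L^{M[G]} = L^M = M$; and by Proposition~\ref{prop-HL} applied inside $M[G]$, $L^{M[G]} \subseteq Imp^{M[G]}$. Thus $M \subseteq Imp^{M[G]}$.

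Next, working in $M$, I would pick an ordinal $\eta$ with $\mathcal{P}(\mathbb P) \in L_\eta$, so that every dense subset of $\mathbb P$ in $M$ lies in $L_\eta^M$. In $M[G]$, choose a limit $\alpha$ large enough that $L_\eta^M \in Imp_\alpha^{M[G]}$, which is possible since $L_\eta^M \in M \subseteq Imp^{M[G]}$. By transitivity of $Imp_\alpha^{M[G]}$ for limit $\alpha$, this yields $L_\eta^M \subseteq Imp_\alpha^{M[G]}$, and in particular $\mathbb P \subseteq Imp_\alpha^{M[G]}$, so $G$ is an eligible subset of $Imp_\alpha^{M[G]}$. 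Let $\varphi(S)$ be the formula, with parameters $\mathbb P$ and $L_\eta^M$, asserting that $S$ is a filter on $\mathbb P$ that meets every $D \in L_\eta^M$ which is dense in $\mathbb P$. Clearly $G$ satisfies $\varphi$; conversely, any $S \in Imp_\alpha^{M[G]}$ satisfying $\varphi$ meets every dense subset of $\mathbb P$ in $M$, hence is $\mathbb P$-generic over $M$, and so coincides with $G$ by the uniqueness hypothesis. Therefore $G \in Imp_{\alpha+1}^{M[G]} \subseteq Imp^{M[G]}$. Finally, since $Imp^{M[G]}$ is an inner model of $ZF$ containing both $M$ and $G$, it contains $M[G]$; the reverse inclusion is automatic, and so $(Imp)^{M[G]} = M[G]$.

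The main obstacle, or rather the critical design choice, is arranging an implicit definition of $G$ that uses only parameters available in $Imp_\alpha^{M[G]}$. We cannot refer to $M$ directly, since $M$ is a proper class from the standpoint of $M[G]$; and naively quantifying over ``constructible dense sets'' inside $Imp_\alpha^{M[G]}$ risks missing dense subsets of $\mathbb P$ that live in $L$ beyond the ordinal height of $Imp_\alpha^{M[G]}$. Choosing $\eta$ so that $\mathcal P(\mathbb P)^M \subseteq L_\eta^M$ and then using $L_\eta^M$ as a \emph{set} parameter sidesteps this issue: it bundles every dense subset of $\mathbb P$ in $M$ into a single element of $Imp_\alpha^{M[G]}$, so that $\varphi$ genuinely expresses ``$\mathbb P$-generic over $M$'' for subsets of $Imp_\alpha^{M[G]}$, and the uniqueness of $G$ in $M[G]$ does the rest.
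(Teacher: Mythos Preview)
Your argument is correct and is essentially the natural proof. Note that the paper does not supply its own proof of this proposition; it is stated with attribution to Hamkins and Leahy \cite{HL}. Your approach---using $M=L\subseteq Imp^{M[G]}$ to get the poset and a set $L_\eta^M$ containing all dense subsets into some $Imp_\alpha$, then writing down the obvious formula expressing ``$S$ is a filter meeting every dense set in $L_\eta^M$''---is exactly the intended idea, and the uniqueness hypothesis finishes it.

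One small slip: in the converse step you write ``any $S\in Imp_\alpha^{M[G]}$ satisfying $\varphi$'' where you mean ``any $S\subseteq Imp_\alpha^{M[G]}$ satisfying $\varphi$,'' since implicit definability requires uniqueness among \emph{all} subsets of $Imp_\alpha$ in the ambient universe $M[G]$, not just among elements of $Imp_\alpha$. Your argument already handles this correctly (any such $S$ lies in $M[G]$ and is $\mathbb P$-generic over $M$, hence equals $G$), so this is purely a typo.
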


\begin{proposition}[Hamkins and Leahy \cite{HL}]
\label{prop-almosthomog}  If $G$ is $\mathbb P$-generic over $M$, where $\mathbb P$ is an almost-homogeneous notion of forcing in $M$, then $M[G] \models Imp \subseteq M$.
\end{proposition}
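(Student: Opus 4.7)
The plan is to prove by induction on $\alpha$ the strengthened statement $Imp_\alpha^{M[G]} \in M$, from which $M[G] \models Imp \subseteq M$ follows at once by taking the union over all ordinals. The only tool is the standard homogeneity principle: since $\mathbb P$ is almost-homogeneous in $M$, for any formula $\varphi$ and any parameters $\vec a \in M$, the truth value of $\varphi(\vec a)$ in $M[G]$ is decided by the empty condition and is therefore computable inside $M$. Combining this with Separation in $M$ yields the principle I will invoke repeatedly: any set $S \in M[G]$ that is (i) definable in $M[G]$ from parameters in $M$ and (ii) a subset of some set lying in $M$ is itself in $M$.

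The base case is trivial since $Imp_0 = \emptyset$. For the limit step, assuming each $Imp_\alpha^{M[G]} \in M$ for $\alpha < \lambda$, every element of $Imp_\lambda^{M[G]}$ lies in $M$, and since $Imp_\lambda^{M[G]}$ is a set in $M[G]$ its elements have bounded rank, so $Imp_\lambda^{M[G]}$ is contained in a set of $M$. It is definable in $M[G]$ from the parameter $\lambda$, so the principle gives $Imp_\lambda^{M[G]} \in M$.

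The successor case is the heart of the argument. Write $A := Imp_\alpha^{M[G]}$, which by induction belongs to $M$. Given any $S \in Imp_{\alpha+1}^{M[G]}$, there are a formula $\varphi$ and parameters $\vec a \in A$ such that in $M[G]$, $S$ is the unique subset of $A$ with $(A, \in, S) \models \varphi(\vec a)$. The relation $x \in S$ can therefore be rewritten inside $M[G]$ as ``$x$ belongs to the unique $T \subseteq A$ satisfying $(A, \in, T) \models \varphi(\vec a)$'', a formula with parameters $x, A, \vec a$ all drawn from $M$. Since also $S \subseteq A \subseteq M$, the homogeneity principle gives $S \in M$. Hence $Imp_{\alpha+1}^{M[G]} \subseteq \mathcal{P}(A)^M \in M$, and one more application of the principle, now applied to the defining property of membership in $Imp_{\alpha+1}^{M[G]}$ (viewed as a subset of the $M$-set $\mathcal{P}(A)^M$ and defined in $M[G]$ from the parameter $A$), shows that $Imp_{\alpha+1}^{M[G]}$ itself is in $M$.

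The subtlety I expect to matter most is that the uniqueness clause in the definition of implicit definability is evaluated in $M[G]$, which in principle sees more subsets of $A$ than $M$ does; in particular one cannot hope to prove the equality $Imp_\alpha^{M[G]} = Imp_\alpha^M$, and $Imp^{M[G]}$ may well be a proper subclass of $Imp^M$. The argument sidesteps this by directly showing that any witness $S$ to implicit definability in $M[G]$ must nevertheless lie in $M$, using only that its parameters come from $A \in M$ and that almost-homogeneity promotes definability in $M[G]$ from ground-model parameters to definability in $M$.
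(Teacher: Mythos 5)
Your proof is correct, and since the paper simply cites this proposition from Hamkins and Leahy without reproving it, there is nothing to diverge from: your induction on $\alpha$ showing $Imp_\alpha^{M[G]} \in M$, using that almost-homogeneity makes every statement with check-name parameters decided by the trivial condition, is exactly the standard argument. The one subtlety you flag — that uniqueness is evaluated in $M[G]$, so one gets $Imp_\alpha^{M[G]} \subseteq M$ rather than $Imp_\alpha^{M[G]} = Imp_\alpha^M$ — is indeed the right thing to be careful about, and your handling of it (promoting definability over $M$-parameters to membership in $M$, rather than trying to identify the two hierarchies) is sound.
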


Hamkins and Leahy use Proposition~\ref{prop-unique} in the proof of the consistency of $Imp \neq L$ \cite{HL}.

By playing off rigidity (unique generics) against (almost) homogeneity, we can control which sets belong to $Imp$ in generic extensions and in their submodels.

We begin with the following proposition:

\begin{theorem}
\label{thm-notCH}
  If $ZF$ is consistent, so is $ZFC + (Imp \models \neg CH)$.
\end{theorem}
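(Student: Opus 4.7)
The strategy is to apply Proposition~\ref{prop-unique}: I aim to find, in $L$, a forcing poset $\mathbb{P}$ such that (i) $L[G] \models 2^{\aleph_0} \geq \aleph_2$ for the $\mathbb{P}$-generic $G$, (ii) cardinals are preserved, and (iii) $G$ is the unique $L$-generic filter for $\mathbb{P}$ belonging to $L[G]$. Once such a $\mathbb{P}$ is in hand, Proposition~\ref{prop-unique} immediately gives $Imp^{L[G]} = L[G]$, and combined with (i)--(ii) this yields $L[G] \models Imp \models \neg CH$, proving consistency relative to $\operatorname{Con}(ZF)$ (equivalently $\operatorname{Con}(ZFC)$, since $L \models ZFC$).

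For condition (iii), standard forcings like Cohen or Sacks are useless: being almost-homogeneous (or otherwise non-rigid), they produce many generic filters in the extension. The natural candidate is Jensen's $\Pi^1_2$-singleton forcing, a definable ccc forcing in $L$ that adds a real which is the unique $L$-generic real in the extension, characterized combinatorially by the self-coding built into the forcing conditions. To achieve $\neg CH$, I would iterate Jensen-style forcing $\omega_2$ times in $L$ with appropriate support and diagonal bookkeeping, arranging that the iteration (a) is proper (preserving $\aleph_1$) and of size $\aleph_2$ (preserving $\aleph_2$), (b) adds $\aleph_2$ new reals, and (c) retains the unique-generic property at the top.

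The main obstacle is (c) for the iteration: single-stage rigidity does not automatically propagate through an iteration, since additional generics could conceivably materialize at or after limit stages. What is required is that, in $L[G]$, one can definably reconstruct (uniformly in $\alpha<\omega_2$) the generic real $r_\alpha$ added at stage $\alpha$, and hence recover the full filter $G$. This uses an absoluteness argument tied to the $\Pi^1_2$ self-coding in each factor: the coding must be arranged so that its decoding is absolute between $L[G_\alpha]$ and $L[G]$. Rigid iterations and products of Jensen's forcing with exactly this property have been constructed in the literature (see, e.g., work of Kanovei and Lyubetsky), and one could invoke or adapt such a construction here. With a suitable $\mathbb{P}$ in place, the theorem follows by a single appeal to Proposition~\ref{prop-unique}.
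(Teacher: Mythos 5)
Your proposal is correct and matches the paper's argument: the paper likewise reduces everything to Proposition~\ref{prop-unique} applied to a forcing over $L$ with a unique generic filter that forces $\neg CH$, citing exactly the constructions you describe (Abraham's iteration of Jensen's reals \cite{abraham} and Groszek's model \cite{groszek} for a minimal failure of $CH$). The rigid-iteration construction you correctly identify as the main obstacle is precisely what those cited papers supply, so no further work is needed beyond the appeal to Proposition~\ref{prop-unique}.
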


\begin{proof}
Abraham's model \cite{abraham} and Groszek's model \cite{groszek}
 for a minimal failure of $CH$ are each produced by forcing over a model $M$ of $V=L$ with a poset $\mathbb P$ such that $G$ is the unique element of $M[G]$ that is  $\mathbb P$-generic over $M$, and $M[G] \models \neg CH$.  Since, by Proposition~\ref{prop-unique}, in each of these models $Imp = M[G]$, it follows that in each of these models $Imp \models \neg CH$.
\end{proof}

To prove the main theorems of this paper, we will employ the technique of Groszek \cite{groszek} to produce unique generics.  This entails coding a generic sequence of reals into the degrees of constructibility of the generic extension, by combining products and iterations of Sacks forcing.

For the remainder of this paper, will will use the following notation.

We identify a set with its characteristic function, so if $x \in 2^\alpha$, we may say $\beta \in x$ rather than $x(\beta) = 1$.

We let $M$ be a model of $ZFC + V=L$, and define forcing partial orders in $M$.

$\mathbb S$ will denote Sacks forcing.  Forcing with $\mathbb S$ over a model of $V=L$ adds a generic real $g \subseteq 2^\omega$ of minimal (nonzero) $L$-degree \cite{sacks}.

$\mathbb Q$ will always denote a countable-support iteration $\left< \mathbb Q_\beta \;|\; \beta < \alpha \right>$
 of some countable length $\alpha$, such that each $\mathbb Q_\beta$ is forced to be either $\mathbb S$ or $\mathbb S \times \mathbb S$, and $\mathbb Q_0 = \mathbb S$.  Whether $\mathbb Q_\beta$ is $\mathbb S$ or $\mathbb S \times \mathbb S$ may depend on the generic sequence below $\beta$.

 The following proposition follows from earlier work on Sacks forcing (for example, see Baumgartner and Laver \cite{BL}, and Groszek \cite{groszek}).

 \begin{proposition}
 \label{prop-degstruc}
  Forcing with $\mathbb Q$ preserves $\omega_1$, and if $G$ is $\mathbb Q$-generic over a model $M$ of $V=L$, then in $M[G]$ the $L$-degrees of reals are exactly:

 \begin{enumerate}
 \item a well-ordered sequence $\left< d_\beta \;|\; \beta \leq \alpha\right>$, where $d_0$ is the degree of $\emptyset$, $d_{\beta + 1}$ is the degree of the $\mathbb Q_\beta$-generic, and for limit $\beta$, $d_\beta$ is the degree of the sequence of generic reals $\left< d_\gamma \;|\; \gamma < \beta\right>$; and
 \item  for each $\beta < \alpha$ such that $\mathbb Q_\beta$ is $\mathbb S  \times \mathbb S$, a pair of incomparable degrees $d_{\beta,0}$ and $d_{\beta,1}$ between $d_\beta$ and $d_{\beta + 1}$.
\end{enumerate}

\end{proposition}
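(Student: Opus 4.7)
The plan is to prove the proposition by induction on the length $\alpha$ of the iteration $\mathbb Q$, treating $\omega_1$-preservation and the $L$-degree structure separately.

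For $\omega_1$-preservation, I would appeal to the standard theory of countable-support iterations. Sacks forcing $\mathbb S$ is proper (indeed $\omega^\omega$-bounding), and a countable-support iteration of proper forcings is proper (Shelah); hence $\mathbb Q$ is proper and therefore preserves $\omega_1$. Since $\mathbb Q_\beta$ is forced to be either $\mathbb S$ or $\mathbb S\times\mathbb S$, each iterand is proper in the intermediate extension, so the induction on $\alpha$ goes through.

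For the $L$-degree structure, I would induct on $\alpha$. The base case $\alpha=1$ is Sacks's minimality theorem \cite{sacks}: $\mathbb S$ adds a real of minimal nonzero $L$-degree, so the only degrees in $M[G]$ are $d_0$ and $d_1$. In the successor step, write $\mathbb Q = \mathbb Q{\hook}\beta * \dot{\mathbb Q}_\beta$ and $G = G{\hook}\beta * H$. By the induction hypothesis applied in $M$, the degrees in $M[G{\hook}\beta]$ form the asserted structure of length $\beta+1$ with top $d_\beta$. If $\mathbb Q_\beta = \mathbb S$, then $H$ is a single Sacks real over $M[G{\hook}\beta]$ and hence of minimal degree over that model, producing exactly one new degree $d_{\beta+1}$ on top of $d_\beta$. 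If $\mathbb Q_\beta = \mathbb S\times\mathbb S$, standard product-Sacks analysis (Baumgartner--Laver \cite{BL}) gives that $H=(H_0,H_1)$ with $H_0, H_1$ mutually generic, each of minimal degree over $M[G{\hook}\beta]$, pairwise incomparable, and with $(H_0,H_1)$ of strictly greater degree; this provides the pair $d_{\beta,0}, d_{\beta,1}$ strictly between $d_\beta$ and $d_{\beta+1}$.

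The genuinely hard step is the limit case, and this is where I expect the main obstacle. Let $\alpha$ be a countable limit. One direction is easy: the sequence $\langle d_\gamma \mid \gamma<\alpha\rangle$ can be coded as a single real in $M[G]$, giving a degree $d_\alpha$ above all $d_\gamma$ for $\gamma<\alpha$. The hard direction is showing that every real $x \in M[G]$ either belongs to some intermediate extension $M[G{\hook}\gamma]$ for $\gamma<\alpha$ (and so has one of the already-enumerated degrees by the induction hypothesis) or has degree exactly $d_\alpha$. For this I would adapt the fusion argument of Baumgartner--Laver \cite{BL} and Groszek \cite{groszek} for countable-support iterations of Sacks-like forcings: given a name $\dot x$ for a real and a condition $p$, either some extension of $p$ forces $\dot x$ into an intermediate model $M[G{\hook}\gamma]$, or one can build a fusion sequence of conditions producing a master condition $q \leq p$ which forces the generic sequence $\langle d_\gamma \mid \gamma<\alpha\rangle$ to be recoverable from $\dot x$ (so $d_\alpha \leq \deg(\dot x)$) while $\dot x$ is recoverable from the generic sequence (so $\deg(\dot x)\leq d_\alpha$). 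The fusion uses the perfect-tree structure of Sacks conditions to decide longer and longer initial segments of $\dot x$ while shrinking only on a finite support, and the key technical point is verifying that the fusion limit is a condition in the countable-support iteration and that the coding is mutual. Once this limit-case analysis is in hand, the proposition follows by combining it with the induction hypothesis.
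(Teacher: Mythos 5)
The paper gives no proof of this proposition at all: it is stated as a consequence of the earlier literature, with pointers to Baumgartner--Laver \cite{BL} and Groszek \cite{groszek}. So your sketch is not competing with an argument in the text; the relevant comparison is with those sources (and with the fusion machinery the paper itself develops in Section~\ref{sec-technical} for the product setting). Your overall strategy --- properness for $\omega_1$-preservation, induction on $\alpha$, and a fusion/coding dichotomy --- is the right one and is essentially what those sources do.

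There is, however, a genuine gap in your successor step. You claim that when $\mathbb Q_\beta = \mathbb S$, the fact that $H$ is of minimal degree \emph{over $M[G{\hook}\beta]$} already "produces exactly one new degree $d_{\beta+1}$ on top of $d_\beta$." Relative minimality gives $M[G{\hook}\beta][x] = M[G{\hook}(\beta+1)]$ for any new real $x$, but the proposition is about $L$-degrees, and it requires that every such $x$ satisfies $G{\hook}\beta \le_L x$, i.e., that $x$ codes the entire generic sequence below $\beta$. This does not follow from minimality over the intermediate model: in the \emph{product} $\mathbb S \times \mathbb S$ the second generic is also minimal over $L[g_0]$, yet it does not construct $g_0$ and the resulting degrees form a diamond rather than a tower. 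The tower structure is precisely the iteration-specific coding fact, and it needs the same fusion argument (decide contradictory facts about $\dot x$ on the restrictions $p_{(\sigma)}$, so that $\dot x$ recovers which $p_{(\sigma)}$ lie in $G$ and hence recovers the whole generic sequence) that you correctly identify for the limit case --- run through the entire iteration up to $\beta+1$, not just the last step. The same issue appears in your $\mathbb S\times\mathbb S$ step, where placing $d_{\beta,0}$ and $d_{\beta,1}$ \emph{above} $d_\beta$ again requires that each coordinate generic codes $G{\hook}\beta$. The fix is organizational rather than conceptual: state the fusion dichotomy ("either $\dot x$ is forced into some $M[G{\hook}\gamma]$, or $\dot x$ and the generic sequence are mutually constructible below some condition") for arbitrary $\alpha$, successor or limit, and derive both induction steps from it; this is exactly the shape of case (2) in the proof of Proposition~\ref{prop-technical} and of Lemma~\ref{lemma-amal} in Section~\ref{sec-technical}.
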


$\mathbb P$ will always denote a countable-support product $\displaystyle \prod_{i \in I} \mathbb P_i$, where each $\mathbb P_i$ has the form $\mathbb Q$ described above.

We will denote the $\mathbb P$-generic sequence by $G = \left< G_i \;|\; i \in I\right>$, where $G_i$ is $\mathbb P_i$-generic.

Each $G_i$ is equivalent to a sequence of generic reals of countable length $\alpha_i$; we will denote the join of these reals (relative to some fixed counting of $\alpha_i$) as $g_i$.

An important technical lemma is the following.

\begin{lemma}
\label{lemma-technical}
Suppose $M \models V=L$, the poset $\mathbb P \in M$ is as described above, and $G$ is $\mathbb P$-generic over $M$, where $G = \left< G_i \;|\; i \in I\right>$.  If $x$ is a real in $ M[G]$ and for all $i \in I$ we have $x \not\in M[G_i]$, then the $L$-degree of $x$ lies above at least two minimal (nonzero) $L$-degrees of reals.
\end{lemma}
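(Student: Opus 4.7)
The plan is to show that the $L$-degree of $x$ lies above $[s_i]$ and $[s_j]$ for at least two distinct $i, j \in I$, where $s_k$ denotes the stage-$0$ Sacks generic of the iteration $\mathbb P_k$. Each $[s_k]$ is the unique minimal nonzero $L$-degree of a real in $M[G_k]$ by Proposition~\ref{prop-degstruc}, and it remains a minimal nonzero $L$-degree in $M[G]$: if $y \in M[G]$ satisfied $0 <_L y <_L s_k$ then $y \in M[s_k] \subseteq M[G_k]$, whence $[y] \geq [s_k]$ by Proposition~\ref{prop-degstruc}, a contradiction.

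The core of the argument is a factoring claim: for any real $x \in M[G]$ and any $i \in I$,
\[
\text{if } s_i \not\leq_L x \text{ then } x \in M[G_{I \setminus \{i\}}].
\]
Granting this, suppose for contradiction that $s_i \leq_L x$ for at most one $i_0 \in I$. By the factoring claim, $x \in M[G_{I \setminus \{i\}}]$ for every other $i$; taking the intersection and using the standard property $M[G_S] \cap M[G_T] = M[G_{S \cap T}]$ for factors of a product-forcing generic, we find $x \in M[G_{i_0}]$ (or $x \in M$ if no such $i_0$ exists), contradicting the hypothesis that $x \notin M[G_j]$ for every $j \in I$. Hence $s_i \leq_L x$ holds for at least two $i$, proving the lemma.

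To prove the factoring claim, I would use the equivalence
\[
s_i \in M[x] \iff (M[x] \cap M[G_i]) \not\subseteq M,
\]
whose nontrivial direction follows from Proposition~\ref{prop-degstruc}: any $z \in M[x] \cap M[G_i]$ not lying in $M$ has $L$-degree at least $[s_i]$ in $M[G_i]$, whence $s_i \in M[z] \subseteq M[x]$. The factoring claim thus reduces to exhibiting, whenever $x \notin M[G_{I \setminus \{i\}}]$, a nonconstructible real in $M[x] \cap M[G_i]$.

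The main obstacle is producing this witness, and I would do so by a fusion and continuous-reading-of-names argument tailored to the iterations $\mathbb{P}_i$ (whose stages are $\mathbb{S}$ or $\mathbb{S} \times \mathbb{S}$), in the style of Baumgartner--Laver~\cite{BL} and Groszek~\cite{groszek}. Given a $\mathbb{P}$-name $\tau$ for $x$ and a condition $p \in G$ forcing $\dot x \notin M[\dot G_{I \setminus \{i\}}]$, fusion below $p$ yields a stronger condition $p^* \in G$ and a $\mathbb{P}_i$-name $\sigma$ such that $p^*$ forces $\sigma$ to be read off continuously from $\dot x$. Then $\sigma^G \in M[x] \cap M[G_i]$, and it cannot lie in $M$, for otherwise $p^*$ would determine $\dot x$ from $\dot G_{I \setminus \{i\}}$ alone, contradicting the choice of $p^*$. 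This supplies the required witness and completes the proof.
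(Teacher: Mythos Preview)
Your factoring claim is precisely the contrapositive of the paper's Proposition~\ref{prop-aboveminimal}, and a fusion argument is indeed the right tool there. But your justification that $\sigma^G\notin M$ is backwards: knowing that $\sigma^G$ is readable from $x$ and that $\sigma^G\in M$ tells you nothing about whether $x$ lies in $M[G_{I\setminus\{i\}}]$---the computability runs the wrong way. The paper sidesteps this by arranging the fusion so that $x$ directly recovers the stage-$0$ Sacks generic $(g_i)_0$ itself; then non-membership in $M$ is automatic.

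The more serious gap is the intersection step. The pairwise identity $M[G_S]\cap M[G_T]=M[G_{S\cap T}]$ is fine (via mutual genericity), but you apply it to the family $\{I\setminus\{i\}:i\neq i_0\}$, and the pairwise statement does not iterate to an infinite---here, possibly uncountable---intersection. The assertion ``$x\in M[G_{I\setminus\{i\}}]$ for every $i$ implies $x\in M$'' is not a standard product-forcing fact; it is exactly what the paper establishes as case~(1) of Proposition~\ref{prop-technical}, and it requires its own construction: a decreasing sequence of conditions that successively sheds coordinates $i(0),i(1),\dots$ while deciding $x(0),x(1),\dots$, never again touching a shed coordinate, so that the limit condition decides all of $x$. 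Case~(2) (a single exceptional coordinate $k$) needs a further hybrid, interleaving this coordinate-shedding with a genuine fusion sequence inside $\mathbb P_k$. Thus the step you treat as a citation is roughly half the work of the lemma.
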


This lemma is proven using a fusion construction of the sort common to Sacks forcing arguments.  Since the proof is not especially illuminating of any new ideas, we defer it to Section~\ref{sec-technical}, at the end of the paper.  At the end of Section~\ref{sec-technical} we state a more general result about degrees of constructibility of reals in generic extensions by forcing notions built from Sacks forcing.

\section{Separating $Imp$ from $HOD$}

In this section, we produce a model $N$ in which $Imp \neq HOD$.

\begin{definition}
Let $M$ be a model of $V=L$, and in $M$, let $\mathbb P = \displaystyle\prod_{i \in I} \mathbb P_i$ be the countable-support product defined by letting $I$ be $ \omega_1 \times \omega_1$ and $\mathbb P_{(\alpha, \beta)}$ be the length $\alpha$ iteration of Sacks forcing $\mathbb S$.
\end{definition}

By Proposition~\ref{prop-degstruc}, each $\mathbb P_{(\alpha,\beta)}$ adds an initial segment of degrees of constructibility of reals of order type $\alpha + 1$,  which we will call a tower of height $\alpha + 1$, with top point $deg(g_{(\alpha,\beta)})$.  (By the conventions stated after Proposition~\ref{prop-degstruc}, $g_{(\alpha,\beta)}$ denotes the join of the sequence of reals added by $\mathbb P_{(\alpha,\beta)}$.)  Furthermore, in $M[G]$, the only well-ordered initial segments of the degrees of constructibility of reals are these towers and their initial segments.  (To see this, suppose $x$ is a real whose degree is not in one of these towers,  Then for all $i \in \omega_1 \times \omega_1$, we have $x \not\in M[G_i]$.  Therefore, by the technical lemma (Lemma~\ref{lemma-technical}), $x$ lies above at least two minimal (nonzero) $L$-degrees of reals, so its degree is not in any well-ordered tower of $L$-degrees.)  Hence, each of these towers is maximal.  In our argument later, we will code information into certain submodes of the forcing extension by controlling the ordinals $\alpha$ for which there is a unique such maximal tower.

In $M[G]$ there are also Cohen subsets of $\omega_1$, that is, subsets of $\omega_1$ that are
generic over $M$ for the forcing $Add(\omega_1, 1)$ whose conditions are countable partial functions from $\omega_1$ to $2$.  One such element is $x$, defined by $x(\alpha) = g_{(\alpha, 0)}(0)$.

Define the model $N$ by $N= M[H]$, where
$$H = \left< G_{(\alpha, \beta)}\;|\; (\gamma \in Lim \cup \{0\} \;\&\;  n \in \omega  \;\&\;   \alpha = \gamma + 2n \;\&\; x(\gamma + n ) = 0) \;\Rightarrow\; \beta = 0 \right>.$$
That is, for $\gamma \in Lim \cup \{0\}$, if $x(\gamma + n) = 0$ then we omit from $N$ all but one maximal tower of height $\gamma + 2n + 1$.

\begin{claim}
\label{claim-noextragenerics}  If $G_{(\alpha, \beta)}$ is not an entry in the sequence $H$, then no real whose (nonzero) degree is in the tower added by $G_{(\alpha, \beta)}$ is in $N$.

Therefore, the maximal towers in $N$ are precisely those added by the $G_{(\alpha, \beta)}$, where for some $\gamma \in Lim \cup \{0\}$ and $n \in \omega$, we have either $ \alpha = \gamma + 2n \;\&\; (x(\gamma + n ) = 0 \;\Rightarrow\; \beta = 0)$ or $\alpha = \gamma + 2n + 1$.
\end{claim}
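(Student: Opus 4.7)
The plan is to establish the first assertion of the claim by exploiting the product structure of $\mathbb P$ at a single index and the mutual genericity of the resulting factors, and then to read off the description of the maximal towers in $N$ as a corollary that combines the first assertion with the analysis of $L$-degrees in $M[G]$ already carried out in the paragraph preceding the claim.

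For the first assertion, fix any index $(\alpha, \beta) \in I = \omega_1 \times \omega_1$ with $G_{(\alpha, \beta)}$ absent from $H$; note that the defining condition on $H$ forces $\beta \neq 0$ in this case, since the condition is automatically satisfied when $\beta = 0$. Write $G' = G_{I \setminus \{(\alpha, \beta)\}}$. Since $\mathbb P$ is a countable-support product, it factors in $M$ as $\mathbb P_{(\alpha, \beta)} \times \mathbb P_{I \setminus \{(\alpha, \beta)\}}$, so $G_{(\alpha, \beta)}$ and $G'$ are mutually generic over $M$, giving $M[G_{(\alpha, \beta)}] \cap M[G'] = M$ by the standard intersection lemma for mutually generic extensions. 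Moreover, the parameter $x$ used to define $H$ is computed entirely from the reals $g_{(\gamma, 0)}$, all of which lie in $M[G']$ (because $\beta \neq 0$), so $H \in M[G']$ and hence $M[H] \subseteq M[G']$. Consequently $M[H] \cap M[G_{(\alpha, \beta)}] \subseteq M$. Now suppose, toward contradiction, that some real $y \in N = M[H]$ has nonzero $L$-degree lying in the tower added by $G_{(\alpha, \beta)}$; then some $z \in M[G_{(\alpha, \beta)}]$ has $\deg_L(z) = \deg_L(y)$, so $L[y] = L[z] \subseteq M[G_{(\alpha, \beta)}]$ (using $M = L$), and in particular $y \in M[G_{(\alpha, \beta)}]$. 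Combined with $y \in M[H]$ this forces $y \in M$, whence $\deg_L(y) = 0$, contradicting our assumption.

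The second assertion then follows at once. Every real of $M[G]$ has $L$-degree in one of the towers added by some $G_{(\alpha, \beta)}$, and each such tower is maximal in $M[G]$; since $N \subseteq M[G]$, the same holds of reals of $N$, the first assertion kills every tower with $G_{(\alpha, \beta)} \notin H$, and maximality transfers down because $N$ has fewer reals than $M[G]$. The case split $\alpha = \gamma + 2n$ versus $\alpha = \gamma + 2n + 1$ in the conclusion is just the unique decomposition of a countable ordinal as $\gamma + k$ with $\gamma \in \mathrm{Lim} \cup \{0\}$ and $k \in \omega$, separated by the parity of $k$, and it reproduces the defining condition on $H$ exactly. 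The one step requiring care is the single-index product splitting together with the intersection lemma; the crucial small observation that makes everything work is that, because $M = L$, sharing an $L$-degree with a real in $M[G_{(\alpha, \beta)}]$ is already enough to place $y$ itself in $M[G_{(\alpha, \beta)}]$, and this is what lands $y$ in the intersection $M[H] \cap M[G_{(\alpha, \beta)}] \subseteq M$.
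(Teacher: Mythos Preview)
Your proof is correct and follows essentially the same approach as the paper: factor $\mathbb P$ as a product, observe that $H$ lands in the factor omitting the coordinate $(\alpha,\beta)$ (using $\beta\neq 0$ so that $x$ is computable there), and apply the mutual-genericity intersection lemma; the paper phrases this via a forcing condition $p$ and removes the larger block $\{(\alpha,\beta'):\beta'\neq 0\}$ rather than the single coordinate, but the idea is identical and your version is arguably cleaner. One minor inaccuracy in your second paragraph: it is not true that every real of $M[G]$ has $L$-degree in some tower (for instance, joins of generics from distinct towers do not); what you need, and what the paragraph preceding the claim actually establishes, is that every \emph{well-ordered initial segment} of $L$-degrees of reals in $M[G]$ lies inside one of these towers, which is exactly what makes the ``therefore'' go through.
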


\begin{proof}
Suppose $p$ forces that $G(\alpha,\beta)$ is not an entry in $H$.  That is, for some $\gamma \in Lim \cup \{0\}$ and some $n \in \omega$, we have $\alpha = \gamma + 2n$ and $\beta \neq 0$ and $p$ forces that $x(\gamma + n) = 0$ (that is, $p \forces g_{\alpha, 0}(0) = 0$).  Since $p$ forces that $g_{\alpha, 0}(0) = 0$, then $p$ forces that $H$ is an element of
$$M' = M[\left< G_{\alpha', \beta'} \;|\; \alpha' = \alpha \;\Longrightarrow\; \beta' = 0\right>].$$
But since $\left< G_{\alpha', \beta'} \;|\; \alpha' = \alpha \;\Longrightarrow\; \beta' = 0\right>$ is generic over $M$ for $\displaystyle\prod_{i \in J} \mathbb P_i$, where $J \subset I$ is in $M$ and $(\alpha, \beta) \not\in J$, it follows by standard results about product forcing that no element of $M[G_{(\alpha,\beta)}] \setminus M$ is in $M'$.  Since $M' \supseteq N$, this completes the proof.
\end{proof}

\begin{claim}  In $N$, $x$ is ordinal definable
\end{claim}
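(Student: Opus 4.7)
The plan is to read off each bit of $x$ from the ordinal-definable combinatorial structure of $L$-degrees of reals in $N$, using Claim~\ref{claim-noextragenerics} to invert the definition of $H$.

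Given $\delta < \omega_1$, I would first use absolute ordinal arithmetic to write $\delta = \gamma + n$ uniquely, where $\gamma$ is $0$ or a limit ordinal and $n < \omega$, and then set $\alpha = \gamma + 2n$. By the previous claim, in $N$ the maximal towers of $L$-degrees of reals of height $\alpha+1$ are precisely the single tower added by $G_{(\alpha,0)}$ if $x(\delta) = 0$, and are the $\aleph_1$-many towers added by the $G_{(\alpha,\beta)}$ for $\beta < \omega_1$ if $x(\delta) = 1$. This yields the equivalence
\[
x(\delta) = 0 \;\iff\; \text{there is a unique maximal tower of $L$-degrees of reals of height } \alpha+1 \text{ in } N,
\]
which recovers $x(\delta)$ from the ordinal $\delta$ alone.

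To conclude that $x$ is ordinal definable in $N$, I would note that the predicates ``$d$ is the $L$-degree of a real'' and ``$T$ is a maximal well-ordered initial segment of $L$-degrees of reals of order type $\alpha+1$'' are definable in $N$ without parameters other than the ordinal $\alpha$, and $\alpha$ is a definable function of $\delta$. Hence the right-hand side above is a uniformly (indeed lightface) ordinal-definable condition on $\delta$, exhibiting the characteristic function of $x$, and therefore $x$ itself (as a set of ordinals), as OD in $N$.

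I do not anticipate a substantive obstacle here: the heavy lifting has been carried out in Claim~\ref{claim-noextragenerics}, which tightly pins down the surviving towers. The only small point I would verify is that the $L$-degree structure on reals in $N$ is itself OD in $N$, so that ``counting'' maximal towers of a given height is an OD operation; but this is immediate, since $L$ and its degree relation are lightface definable.
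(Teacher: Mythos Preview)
Your proposal is correct and follows essentially the same approach as the paper: both read off $x(\gamma+n)$ by checking whether there is a unique maximal tower of $L$-degrees of reals of height $\gamma+2n+1$ in $N$, invoking Claim~\ref{claim-noextragenerics} for the case analysis. Your added remarks about why the relevant predicates are lightface definable are accurate and merely make explicit what the paper leaves implicit.
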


\begin{proof}
For $\gamma \in Lim\cup\{0\}$, recalling that $G_{(\alpha,\beta)}$ adds a maximal tower of height $\alpha + 1$, we have $x(\gamma + n) = 0$ iff there is a unique maximal tower of $L$-degrees of reals of height $\gamma + 2n + 1$.  (If $x(\gamma + n) = 1$, there are $\omega_1$-many maximal towers of height $\gamma + 2n + 1$.  There are always $\omega_1$-many maximal towers of height $\gamma + 2n + 2$.)
\end{proof}

Furthermore, in $N$, there is $G'$ that is $\mathbb P$-generic over $M$, defined by, for $\gamma, \rho \in Lim \cup \{0\}$ and $n,m \in \omega$,
$$G'(\gamma + 2n, \rho + m) = G(\gamma + 2n + 1, \rho + 2m))\hook \gamma + 2n;$$
$$G'(\gamma + 2n + 1, \rho + m) = G(\gamma + 2n + 1, \rho + 2m + 1);
$$
where $G(\alpha,\beta) \hook \delta$, for $\delta < \beta$, is $\{ p \hook \delta \;|\; p \in G(\alpha, \beta)\}$.  That is, we are recovering the existence of many mutually generic maximal towers of $L$-degrees of reals of height $\alpha + 1$ for all $\alpha$, by cutting half the towers of height $\gamma + 2n + 2$ down to height $\gamma + 2n + 1$.

Now we have $M[G'] \subset N \subset M[G]$.

\begin{claim}
$(Imp)^N = M$.
\end{claim}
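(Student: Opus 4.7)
The claim splits into two inclusions. For $M \subseteq (Imp)^N$: since $M\models V=L$ and $N$ has the same ordinals as $M$, we have $L^N = L^M = M$, and by Proposition~\ref{prop-HL}, $L\subseteq Imp$, so $M=L^N\subseteq (Imp)^N$.

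For $(Imp)^N \subseteq M$, I would proceed by induction on $\alpha$ to show $Imp_\alpha^N \in M$. The base and limit cases are routine. For the successor step, fix $Imp_\alpha^N \in M$ and let $S \in Imp_{\alpha+1}^N$ be implicitly defined by a formula $\varphi$ with parameters $\vec{a} \in Imp_\alpha^N \subseteq M$; the goal is $S\in M$. The main tool is a symmetry argument. Let $\mathcal{G}$ be the group of automorphisms of $\mathbb{P}$ induced by permutations $\sigma \in M$ of $I = \omega_1\times\omega_1$ of the form $\sigma(\alpha,\beta) = (\alpha, \sigma_\alpha(\beta))$ with $\sigma_\alpha(0)=0$. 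Any such $\sigma$ fixes the diagonal $\langle G_{(\alpha,0)} : \alpha < \omega_1\rangle$, and hence fixes $x$ and the set of indices appearing in $H$, so the induced automorphism of $M[G]$ preserves $N = M[H]$ as a submodel. Since $Imp_\alpha^N$ and $\vec{a}$ lie in $M$ and are fixed pointwise, and $N$ is preserved, the implicit definition of $S$ is $\mathcal{G}$-invariant; uniqueness forces $\pi(S)=S$ for every $\pi\in\mathcal{G}$.

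A symmetric-names argument then places $S$ in the intermediate model $M[\mathrm{diag}] := M[\langle g_{(\alpha,0)}:\alpha<\omega_1\rangle]$. This submodel is the generic extension of $M$ by the countable-support product $\prod_{\alpha<\omega_1} \mathbb{P}_{(\alpha,0)}$ of Sacks iterations, which is almost homogeneous, so by Proposition~\ref{prop-almosthomog}, $(Imp)^{M[\mathrm{diag}]}\subseteq M$. A parallel induction giving $Imp_\alpha^N = Imp_\alpha^{M[\mathrm{diag}]}$ then ensures that uniqueness of $S$ in $N$ descends to uniqueness in $M[\mathrm{diag}]$ (since $M[\mathrm{diag}]\subseteq N$ has fewer subsets), so $S\in (Imp)^{M[\mathrm{diag}]}\subseteq M$.

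The main obstacle is the symmetric-names step: one must verify that the $\mathcal{G}$-fixed subalgebra of the Boolean completion of $\mathbb{P}$ reduces essentially to the diagonal subforcing, given the countable-support structure and the Sacks iteration components. Running this in tandem with the parallel induction matching $Imp_\alpha^N$ to $Imp_\alpha^{M[\mathrm{diag}]}$ is the technical heart of the proof.
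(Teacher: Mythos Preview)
Your approach diverges from the paper's and leaves real gaps. The paper does not use a symmetric-names argument or the model $M[\mathrm{diag}]$ at all. Instead, it exploits the construction (given just before the claim) of a second $\mathbb{P}$-generic $G'$ inside $N$, obtained by cutting half of the towers of each odd successor height down by one level, yielding a sandwich $M[G']\subseteq N\subseteq M[G]$ in which both outer models are generic extensions of $M$ by the \emph{same} almost-homogeneous poset $\mathbb{P}$. Almost-homogeneity then gives $(Imp_\alpha)^{M[G']}=(Imp_\alpha)^{M[G]}=:I_\alpha\in M$ outright, and the induction $(Imp_\alpha)^N=I_\alpha$ closes by squeezing: uniqueness in $M[G]$ descends to $N\subseteq M[G]$ by inclusion, while for $S$ unique in $N$, existence of a witness transfers from $M[G]$ to $M[G']$ by homogeneity, and uniqueness in $N\supseteq M[G']$ forces that witness to be $S$ itself.

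Your sketch has two concrete problems. First, the symmetric-names step is not merely unfinished but mis-framed: the automorphisms $\pi\in\mathcal{G}$ act on names, not on the set $S\in N$, so ``$\pi(S)=S$'' has no direct content, and the assertion that the $\mathcal{G}$-fixed subalgebra of the completion of $\mathbb{P}$ ``reduces essentially to the diagonal subforcing'' is far from clear for a countable-support product of Sacks iterations. (What you actually want, $S\in M[\mathrm{diag}]$, would follow more directly by observing that $N$ is a generic extension of $M[\mathrm{diag}]$ by an almost-homogeneous sub-product whose index set lies in $M[\mathrm{diag}]$ --- but that is not the argument you gave.) Second, and more seriously, your ``parallel induction'' $Imp_\alpha^N=Imp_\alpha^{M[\mathrm{diag}]}$ is unjustified in the direction $Imp_{\alpha+1}^{M[\mathrm{diag}]}\subseteq Imp_{\alpha+1}^N$: uniqueness of a subset in the smaller model $M[\mathrm{diag}]$ does not imply uniqueness in the larger model $N$, and nothing in your outline handles this. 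The paper's sandwich sidesteps exactly this asymmetry, since its two comparison models $M[G']$ and $M[G]$ are generic for the same forcing and hence have identical $Imp_\alpha$'s from the start.
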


\begin{remark}  The following proof generalizes to show:

Suppose $M \subseteq M' \subseteq N \subseteq M''$ are models of set theory.  Suppose further that $Imp^{M'} \subseteq M$, $Imp^{M''} \subseteq M$, and $M'$ and $M''$ satisfy the same sentences with parameters from $M$.  (This will follow if $M'$ and $M''$ are generic extensions of $M$ via the same almost homogeneous forcing notion in $M$.)  Then
$$Imp^{M'} = Imp^N = Imp^{M''}.$$
\end{remark}

\begin{proof}
The forcing $\mathbb P$ is almost homogeneous, so by Proposition~\ref{prop-almosthomog}, $(Imp)^{M[G']} = (Imp)^{M[G]} = M$.  In fact, for all $\alpha$, we have
$$(Imp_\alpha)^{M[G']  }  = (Imp_\alpha)^{ M[G] } = \{y \in M \;|\; \forces_{\mathbb P} (y \in Imp_\alpha)\},$$
and so we may define $I_\alpha = (Imp_\alpha)^{M[G']  } = (Imp_\alpha)^{M[G]  }  \in M$.

Show inductively that, for all $\alpha$,
$$(Imp_\alpha)^{ N } = I_\alpha.$$

Assume as inductive hypothesis that $(Imp_\alpha)^{ N } = I_\alpha$.

First, suppose that $S \in I_{\alpha + 1} = (Imp_{\alpha + 1})^{M[G]}$.  Then for some formula $\varphi$ and $a_1, \dots, a_n \in I_\alpha$, we have that $S$ is the unique subset of $I_\alpha$ in $M[G]$ such that $(I_\alpha, \in, S) \models \varphi(a_1, \dots, a_n)$.  But since $S \in M$, we have $S \in N$, and since $N \subset M[G]$, we have that $S$ is also unique in $N$.  Hence $S \in (Imp_{\alpha + 1})^N$.

Conversely, suppose that $S \in  (Imp_{\alpha + 1})^N$.  Then, for some formula $\varphi$ and $a_1, \dots, a_n \in I_\alpha$, we have that $S$ is the unique subset of $I_\alpha$ in $N$ such that $(I_\alpha, \in, S) \models \varphi(a_1, \dots, a_n)$.  Since $N \subset M[G]$, we have $S \in M[G]$, and so by the almost-homogeneity of $\mathbb P$, we have
$$\forces_{\mathbb P} (\exists Z \subseteq I_\alpha) \, ((I_\alpha, \in, Z) \models \varphi(a_1, \dots, a_n)).$$
But then
$$M[G'] \models (\exists Z \subseteq I_\alpha) \, ((I_\alpha, \in, Z) \models \varphi(a_1, \dots, a_n)).$$
Since $S$ was unique in $N$ and $M[G'] \subseteq N$, then the only possible such $Z$ in $M[G']$ is $S$.  Therefore $S \in  (Imp_{\alpha + 1})^{M[G']}=  I_{\alpha + 1} $.
\end{proof}

Now we have that $x \not\in M = (Imp)^N$, but $x \in (HOD)^N$.  This proves the following theorem.

\begin{theorem}
\label{thm-ImpNotHOD}
If $ZF$ is consistent, then so is $ZFC + (Imp \neq HOD)$.
\end{theorem}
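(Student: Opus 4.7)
The plan is to invoke the construction of the model $N$ just carried out, and observe that it already contains the ingredients needed to witness $Imp \neq HOD$. The theorem at this point is essentially a packaging step: we have built $N$ so that a distinguished set is ordinal-definable there, yet provably outside the implicit-constructible hierarchy of $N$.

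First I would fix the set $x \in N$ defined by $x(\alpha) = g_{(\alpha,0)}(0)$. By the earlier claim that $x$ is ordinal definable in $N$, which reads $x(\gamma+n)$ off the number of maximal towers of $L$-degrees of reals of height $\gamma+2n+1$, we get $x \in (HOD)^N$. By the claim identifying $(Imp)^N$ with $M$, together with the observation that $x \notin M$ (any code for $x$ is added by the forcing and cannot belong to the ground model $L$, by the standard product-Sacks argument and Proposition \ref{prop-degstruc}), we get $x \notin (Imp)^N$. Hence $N \models Imp \neq HOD$. Since $N$ sits inside the generic extension $M[G]$ and contains all the ordinals, it satisfies $ZFC$, so this establishes the consistency of $ZFC + (Imp \neq HOD)$ relative to $ZF$.

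The main obstacle is not in this final assembly step but in the two supporting claims, both of which have already been discharged. Of those, the harder one is $(Imp)^N = M$: it requires sandwiching $N$ between the two almost-homogeneous extensions $M[G'] \subseteq N \subseteq M[G]$ and exploiting Proposition \ref{prop-almosthomog} together with the level-by-level agreement of $(Imp_\alpha)^{M[G']}$ and $(Imp_\alpha)^{M[G]}$. Once that sandwich is in place, and once Lemma \ref{lemma-technical} has been used to ensure that the maximal towers in $N$ are exactly the ones surviving the filtering defining $N$ (so that $x$ really is recoverable from the tower-height statistics), the present theorem follows essentially by inspection.
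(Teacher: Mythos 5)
Your proposal is correct and matches the paper's argument exactly: the theorem is proved there by the single observation that $x \notin M = (Imp)^N$ while $x \in (HOD)^N$, with all the substance carried by the preceding claims, just as you describe.
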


\section{Separating $(Imp)^{Imp}$ from $Imp$}

In this section, we produce a model $N$ in which $(Imp)^{Imp} \neq Imp$.

\begin{definition}
The forcing poset $\mathbb{SC}_n$ is the countable-support length $\omega$ iteration of  $(\mathbb S_k)_{k < \omega}$ where (letting $g$ denote the join of the generic reals $g_k$ for $\mathbb S_k$, which we may call the generic real for $\mathbb{SC}_n$),
$$\mathbb S_k = \begin{cases}{\mathbb S} & k \leq n \text{ or } (k=n+2+j \;\&\; j \not\in g); \cr{\mathbb S} \times {\mathbb S} & k = n+1 \text{ or } (k=n+2+j \;\&\; j \in g). \cr \end{cases}$$
(Note that the join of an $\omega$-sequence of reals is defined, as in Definition~\ref{def-infjoin}, in such a way that $g(n)$ depends only on the $\mathbb S_k$-generics for $k < n$.)
\end{definition}

If $M \models V=L$, and $G$ is $\mathbb{SC}_n$-generic over $M$, then by Proposition~\ref{prop-degstruc}, in $M[G]$ the degrees of constructibility form a lattice of height $\omega$ and width 2 (a tower of lines and diamonds) that (uniformly) codes $(n,g)$, where $g$ is the generic real.  In particular, in $M[G]$ there is a unique $\SCn$ generic over $M$ (and no $\mathbb{SC}_m$-generic over $M$ for $m \neq n$).
Furthermore, the lattice of degrees of constructibility in $M[G]$ contains a unique minimal nonzero degree.

\begin{definition}
A self-coding real with base $n$ is any real $x$ such that the degrees of constructibility below $x$ form a lattice coding $x$ in the same way that the generic real $g$ for $\mathbb{SC}_n$ is coded by the degrees of constructibility below $g$.
\end{definition}

It is not hard to see that if $x$ is a self-coding real, the base $n$ is uniquely determined by $x$.
Also, if $x$ is a self-coding real, then $\omega_1^{L[x]} = \omega_1^L$.

\begin{claim}
There is a formula $\varphi_{sc}(n)$ such that
\begin{enumerate}
\item  If $X \subset \omega_1^L$, $(L_{\omega_1^L}, \in, X) \models \varphi_{sc}(n)$, $x = X \cap \omega$, and $\omega_1^{L[x]} = \omega_1^L$, then $x$ is a self-coding real with base $n$;
\item  If $x$ is a self-coding real with base $n$, then there is a unique $X \subset \omega_1^L$ such that $X \cap \omega = x$ and $(L_{\omega_1^L},\in,  X) \models \varphi_{sc}(n)$.
\end{enumerate}
\end{claim}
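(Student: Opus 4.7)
The plan is to design $\varphi_{sc}(n)$ so that it asserts ``$X$ canonically encodes the $\SCn$-shaped lattice of $L$-degrees of reals below $x = X \cap \omega$, with $x$ at the top.''  Fix, in $L$, a canonical bijection $\pi : \omega_1^L \to \omega_1^L \times \omega_1^L$ so that a subset $X \subset \omega_1^L$ decodes into a sequence of subsets of $\omega_1^L$, with the first component recovering $X \cap \omega = x$.  The formula reads $x = X \cap \omega$, uses $n$ and $x$ to compute the $\SCn$-lattice shape $T_n(x)$ of height $\omega$ whose level-$k$ structure matches $\mathbb S_k$ (a line if $\mathbb S_k = \mathbb S$, a diamond if $\mathbb S_k = \mathbb S \times \mathbb S$), and then decodes $X$ into a family $\langle y_\sigma : \sigma \in T_n(x)\rangle$ of reals.

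The formula $\varphi_{sc}(n)$ then asserts: (a) the $L$-degree comparabilities among the $y_\sigma$ match those prescribed by $T_n(x)$, with the join of the top-level reals equal to $x$; (b) each $y_\sigma$ is the $<_L$-least real of its $L$-degree; and (c) \emph{completeness}: every real constructible in $L[X]$ by some countable stage that is $\leq_L x$ has the same $L$-degree as some $y_\sigma$.  All three clauses are first-order in $(L_{\omega_1^L}, \in, X)$ because $L_\alpha[X]$ for countable $\alpha$ is a definable operation within that structure, as is the comparison of $L$-degrees of reals appearing there.

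For direction (1): suppose $(L_{\omega_1^L}, \in, X) \models \varphi_{sc}(n)$ and $\omega_1^{L[x]} = \omega_1^L$.  Since every real of $L[x]$ lies in $L_\alpha[x] \subseteq L_\alpha[X]$ for some $\alpha < \omega_1^L$, clause (c) applies to every real of $L[x]$, so $\langle y_\sigma\rangle$ exhaustively represents the $L$-degrees below $x$ in precisely the $\SCn$-lattice coding $x$; hence $x$ is a self-coding real with base $n$.  For direction (2): if $x$ is self-coding with base $n$, then $\omega_1^{L[x]} = \omega_1^L$ and the $L$-degrees below $x$ form the prescribed lattice; set $y_\sigma$ to be the $<_L$-least real of the corresponding degree, encode $\langle y_\sigma\rangle$ as $X$ via $\pi$ while arranging $X \cap \omega = x$, and check that $\varphi_{sc}(n)$ holds.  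Uniqueness of $X$ follows from the canonicity clause (b) and the injectivity of $\pi$.

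The main obstacle is making the completeness clause (c) formally expressible inside $(L_{\omega_1^L}, \in, X)$: one needs to formalize the relativized hierarchy $L_\alpha[X]$ as a $\Delta_1$-definable operation in the structure (using $X$ as a predicate), and then invoke $\omega_1^{L[x]} = \omega_1^L$ to identify reals of $L[x]$ with those appearing at some countable stage of the $L[X]$-construction so that the quantifier in (c) ranges correctly.  Once (c) is formalized, the remaining verifications are routine bookkeeping about canonical $L$-codings and the prescribed lattice structure of $\SCn$.
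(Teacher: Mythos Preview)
Your approach is correct but takes a different route from the paper's.  The paper does not encode representative reals for the individual degrees; instead it lets $X$ encode the entire structure $L_{\omega_1^L}[x]$ together with its truth predicate (all packed so that $X\cap\omega=x$), and then $\varphi_{sc}(n)$ simply asserts that $X$ is the canonical such code and that the coded model satisfies the first-order sentence ``the universe is $L_{\omega_1^L}[r]$ for a self-coding real $r$ with base $n$.''  The virtue of the paper's route is that your completeness clause~(c)---which you rightly flag as the main technical obstacle---disappears: once a full truth predicate for $L_{\omega_1^L}[x]$ is available, the statement ``every real $\leq_L x$ has degree equal to one of the prescribed ones'' is just a single first-order sentence evaluated in the coded model, with no need to formalize the relativized hierarchy $L_\alpha[X]$ separately inside $(L_{\omega_1^L},\in,X)$.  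Your approach, by contrast, is more concrete and records exactly the data needed (the $<_L$-least representative of each degree), at the cost of having to express~(c) by hand; this works, since the hierarchy $\alpha\mapsto L_\alpha[x]$ is uniformly $\Delta_1$ in the predicate and the hypothesis $\omega_1^{L[x]}=\omega_1^L$ guarantees the quantifier over countable stages captures every real of $L[x]$.  Both arguments give uniqueness by the canonicity of the chosen coding.
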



\begin{proof}
Choose a canonical way of coding the structure of $L_{\omega_1^L}[x]$, for any real $x$, into $Y \subseteq \omega_1^L$, coding the truth predicate of the model $L_{\omega_1^L}[x]$ coded by $Y$ into $Z \subseteq \omega_1^L$, and coding $x$, $Y$ and $Z$ into $X \subseteq \omega_1^L$ in such a way that $X \cap \omega = x$.

Then $\varphi_{sc}(n)$ asserts that $X$ is the canonical code for $L_{\omega_1^L}[X \cap \omega]$, and that the model coded by $X$ satisfies a sentence asserting that the universe is $L_{\omega_1^L}[r]$ for a self-coding real $r$ with base $n$.
\end{proof}

\begin{definition}  If $x$ is a self-coding real with base $n$, then the unique $X \subset \omega_1^L$ such that $X \cap \omega = x$ and $(L_{\omega_1^L}, \in,  X) \models \varphi_{sc}( n)$ is denoted $C(x)$, the canonical code for $L_{\omega_1^L}[x]$.
\end{definition}

This implies the following fact.

\begin{claim}
Let $N$ be a model of $ZF$ in which $x$ is the unique self-coding real with base $n$.  Then in $N$ we have $Imp_{\omega_1^L} = (Imp_{\omega_1^L})^L$ (by Proposition~\ref{prop-HL}),  $C(x) \in Imp_{\omega_1^L + 1}$, and $x \in Imp_{\omega_1^L + 2}$.

\end{claim}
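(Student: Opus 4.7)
My plan is to handle the three assertions in turn. The first, $Imp_{\omega_1^L} = (Imp_{\omega_1^L})^L$, is precisely the third item of Proposition~\ref{prop-HL}, so I would simply cite it; as a consequence, $Imp_{\omega_1^L} = L_{\omega_1^L}$ in $N$. For $C(x) \in Imp_{\omega_1^L + 1}$, I plan to exhibit $C(x)$ as the unique $X \subseteq Imp_{\omega_1^L}$ implicitly defined over $L_{\omega_1^L}$ by the formula $\varphi_{sc}(n)$ from the preceding claim, using $n \in \omega \subseteq L_{\omega_1^L}$ as the parameter. Existence is immediate from part 2 of the preceding claim. For uniqueness, given any $X \subseteq L_{\omega_1^L}$ in $N$ with $(L_{\omega_1^L}, \in, X) \models \varphi_{sc}(n)$, I set $x' := X \cap \omega$, apply part 1 of that claim to conclude that $x'$ is self-coding with base $n$, invoke the uniqueness hypothesis on $x$ in $N$ to get $x' = x$, and then apply the uniqueness part of item 2 once more to conclude $X = C(x)$.

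For $x \in Imp_{\omega_1^L + 2}$, I would use $C(x) \in Imp_{\omega_1^L + 1}$ as a parameter and the formula asserting $\forall k\,(S(k) \leftrightarrow k \in \omega \wedge k \in C(x))$, where $S$ is the additional unary predicate from the definition of implicit definability. The unique solution is $x = C(x) \cap \omega \subseteq \omega \subseteq Imp_{\omega_1^L + 1}$, giving $x \in Imp_{\omega_1^L + 2}$ directly.

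The main obstacle is the side condition in part 1 of the preceding claim, namely $\omega_1^{L[x']} = \omega_1^L$, which I need before I can conclude that $x'$ is really self-coding in $N$ rather than merely apparently so inside the coded structure. I would derive this from the canonicity of the encoding: the existence in $N$ of a subset of $\omega_1^L$ bijectively encoding $L_{\omega_1^L}[x']$ prevents $\omega_1^L$ from being collapsed within $L[x']$, forcing $\omega_1^{L[x']} = \omega_1^L$. With this in hand, self-codingness is a property of the $L$-degree structure of the reals below $x'$, all of which lie in $L_{\omega_1^L}[x']$, and so it is absolute between the coded model and $N$, completing the uniqueness step.
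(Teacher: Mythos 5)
The paper states this claim without proof (it is offered as an immediate consequence of the claim about $\varphi_{sc}$ and the definition of $C(x)$), and your decomposition is exactly the intended one: cite Proposition~\ref{prop-HL} for $Imp_{\omega_1^L} = L_{\omega_1^L}$, exhibit $C(x)$ as the unique solution of $\varphi_{sc}(n)$ over $L_{\omega_1^L}$ to get $C(x) \in Imp_{\omega_1^L+1}$, and then recover $x = C(x) \cap \omega$ explicitly (hence implicitly) over $Imp_{\omega_1^L+1}$ with parameter $C(x)$. Those steps are fine.

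The one genuine weak point is your disposal of the side condition $\omega_1^{L[x']} = \omega_1^L$ in the uniqueness argument. Your stated justification --- that the existence in $N$ of a subset of $\omega_1^L$ coding $L_{\omega_1^L}[x']$ ``prevents $\omega_1^L$ from being collapsed within $L[x']$'' --- is a non sequitur: the canonical code of $L_{\omega_1^L}[x']$ is a subset of $\omega_1^L$ that exists for \emph{every} real $x'$, whether or not $L[x']$ contains a surjection of $\omega$ onto $\omega_1^L$. (Collapsing $\omega_1^L$ in $L[x']$ is about the existence of such a surjection as an element of $L[x']$, not about codability of structures of size $\aleph_1^L$.) If $\omega_1^{L[x']} > \omega_1^L$, then $L_{\omega_1^L}[x']$ can omit reals of $L[x']$ and hence miscompute the $L$-degrees below $x'$, which is precisely why the hypothesis appears in part~1 of the $\varphi_{sc}$ claim; your argument does not rule out a spurious $X'$ arising from such an $x'$. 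The correct repair is external to $\varphi_{sc}$: in every model $N$ to which this claim is applied, $N$ is an intermediate model of a countable-support Sacks-type extension of $L$, so $\omega_1^N = \omega_1^L$, and then $\omega_1^L \le \omega_1^{L[x']} \le \omega_1^N$ forces $\omega_1^{L[x']} = \omega_1^L$ for \emph{every} real $x'$ of $N$. With that observation substituted for your justification, the rest of your uniqueness chain ($x' = x$ by hypothesis, then $X' = C(x)$ by part~2) goes through.
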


\begin{definition}  The forcing $\mathbb P$ is a countable-support product
$$ \mathbb S \times \prod_{i \in ( \omega_1 \times \omega)} \mathbb P_i$$
where
$$\mathbb P_{(\alpha,n)} = \SCn.$$

If $G$ is $\mathbb P$-generic, then $G$ is equivalent to the sequence of generics
$$\left<G_\mathbb{S}  ,\, G_{i}  \;|\; {i \in (\omega_1 \times \omega)}\right>$$
or to the sequence of generic reals
$$\left<g_\mathbb{S}  ,\, g_{i}  \;|\; {i \in ( \omega_1 \times \omega)}\right>$$

\end{definition}

\begin{claim}
\label{claim-scinN}  If $M \models V=L$, and $G$ is $\mathbb P$-generic over $M$, then in $M[G]$ the only self-coding reals with base $n$ are the generic reals $g_{(\alpha,n)}$.
\end{claim}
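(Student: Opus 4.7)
The plan is to start from an arbitrary self-coding real $x \in M[G]$ with base $n$ and identify it as one of the generics $g_{(\alpha,n)}$. Since $\mathbb{S}_0 = \mathbb{S}$ in the definition of $\SCn$ (as $0 \leq n$, and level $0$ is never a diamond level), the self-coding lattice of any base begins with a single line at level $0$, so $\deg(x)$ lies above a unique minimal nonzero $L$-degree. The contrapositive of Lemma~\ref{lemma-technical}, applied to the product $\mathbb{P}$ (whose factors $\SCn$ and the singleton Sacks factor are each a countable-support iteration of the form $\mathbb{Q}$), then yields $x \in M[G_i]$ for some index $i$.

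Next I would rule out the distinguished Sacks factor: in $M[G_\mathbb{S}]$ the only $L$-degrees of reals are $0$ and $\deg(g_\mathbb{S})$, which cannot support a self-coding lattice of height $\omega + 1$. Hence $i = (\alpha, m)$ for some $\alpha < \omega_1$ and $m < \omega$. Inside $M[G_{(\alpha,m)}]$, Proposition~\ref{prop-degstruc} describes the $L$-degrees of reals as a chain $d_0 < d_1 < \cdots < d_\omega$, with diamond pairs $d_{k,0}, d_{k,1}$ inserted between $d_k$ and $d_{k+1}$ whenever $\mathbb{S}_k = \mathbb{S} \times \mathbb{S}$ in $\mathbb{SC}_m$. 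Every degree other than $d_\omega$ has only finitely many degrees beneath it, so cannot be the top of a self-coding lattice; therefore $\deg(x) = d_\omega = \deg(g_{(\alpha,m)})$.

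Finally, any real $r$ with $r \leq_L x$ already lies in $L[x] \subseteq M[G_{(\alpha,m)}]$, so the lattice of $L$-degrees below $\deg(x)$ is the same whether computed in $M[G_{(\alpha,m)}]$ or in $M[G]$; it is precisely the full $\mathbb{SC}_m$ lattice, which by construction codes $(m, g_{(\alpha,m)})$. Because $x$ is self-coding with base $n$, the same lattice must also code $(n, x)$; comparing the position of the first diamond gives $n = m$, and comparing the remaining diamond pattern gives $x = g_{(\alpha,n)}$. The one subtle point is the absoluteness of the lattice of $L$-degrees below $x$ between the subextension and the full extension, but this follows immediately from the absoluteness of $L$-constructibility, so I do not anticipate any serious obstacle beyond the analysis already carried out in earlier sections.
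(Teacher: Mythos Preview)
Your argument is correct and follows the same two-step strategy as the paper's proof: first use Lemma~\ref{lemma-technical} to localize any self-coding real to a single factor $M[G_i]$, then use Proposition~\ref{prop-degstruc} to identify it with the generic for that factor. You have supplied more detail than the paper does---in particular, you explicitly rule out the Sacks factor $M[G_{\mathbb S}]$ and spell out the absoluteness of the degree lattice below $x$ and the final identification $x = g_{(\alpha,n)}$---whereas the paper compresses all of this into two sentences.
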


\begin{proof}  By Proposition~\ref{prop-degstruc}, the only self-coding real in $M[G_{(\alpha,n)}]$ is the generic real $g_{(\alpha,n)}$.  By the technical lemma (Lemma~\ref{lemma-technical}), any real not in any $M[G_{(\alpha,n)}]$ lies above at least two different minimal (nonzero) $L$-degrees of reals, and therefore is not a self-coding real with base $n$.
\end{proof}

\begin{definition}  $N = M[H]$ is the submodel of $M[G]$ defined by setting $h$ to be the join of $g_{\mathbb S}$ and the reals $g_{0,n}$, and setting
$$H = \left<\,  g_{\mathbb S} , \,  g_{(\alpha,n)} \,|\, {\alpha< \omega_1 \, \& \, (\alpha=0 \; \text{or} \; n \not\in h)} \right>.$$

\end{definition}

\begin{claim}
\label{claim-uniqueinN}
In $N$, there is a unique self-coding real with base $n$ iff $n   \in h$.

\end{claim}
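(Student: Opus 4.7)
The plan is to split the claim into its two directions based on whether $n \in h$. In either case, Claim~\ref{claim-scinN} guarantees that the only candidate self-coding reals with base $n$ in $N \subseteq M[G]$ are the generic reals $g_{(\alpha, n)}$ for $\alpha < \omega_1$, so the task reduces to determining which of these lie in $N = M[H]$.

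For the easy direction ($n \notin h$), the defining condition of $H$ is satisfied by every pair $(\alpha, n)$ with $\alpha < \omega_1$, so every $g_{(\alpha, n)}$ appears as an entry of $H$ and lies in $N$; these reals sit at the tops of pairwise distinct maximal towers of $L$-degrees by Proposition~\ref{prop-degstruc}, so $N$ contains $\omega_1$-many self-coding reals with base $n$, and no single one is unique.

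For the harder direction ($n \in h$), only the entry $g_{(0, n)}$ appears in $H$, and I would show that no $g_{(\alpha, n)}$ with $\alpha > 0$ lies in $N$ by a two-stage product-forcing factorization. Set $J_0 = \{\mathbb{S}\} \cup \{(0, m) : m < \omega\}$; since $h$ is defined from $g_\mathbb{S}$ and the $g_{(0, m)}$, we have $h \in M[G \upharpoonright J_0]$. Inside $M[G \upharpoonright J_0]$, form the index set $J(h) = J_0 \cup \{(\alpha, m) : \alpha > 0,\; m \notin h\}$; the hypothesis $n \in h$ gives $(\alpha, n) \notin J(h)$ for every $\alpha > 0$. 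The sequence $H$ is definable from $G \upharpoonright J(h)$, so $N \subseteq M[G \upharpoonright J(h)]$. Applying the product lemma in $M[G \upharpoonright J_0]$ to the factorization of $\mathbb{P} \upharpoonright (I \setminus J_0)$ as $\mathbb{P} \upharpoonright (J(h) \setminus J_0) \times \mathbb{P} \upharpoonright (I \setminus J(h))$ yields that $G \upharpoonright (I \setminus J(h))$ is generic over $M[G \upharpoonright J(h)]$, so each $g_{(\alpha, n)}$ with $\alpha > 0$, being part of that complementary generic, is not in $M[G \upharpoonright J(h)]$ and therefore not in $N$.

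The main obstacle I expect is that the index set $J(h)$ depends on the generic and is not available in $M$, so the factorization of $\mathbb{P}$ that the argument needs does not exist in the ground model. The remedy is exactly the two-step passage through $M[G \upharpoonright J_0]$, where $h$ and hence $J(h)$ become parameters, after which the usual product lemma applies to the residual forcing and kills off every $g_{(\alpha, n)}$ with $\alpha > 0$.
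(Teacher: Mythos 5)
Your proof is correct and follows essentially the same route as the paper: reduce via Claim~\ref{claim-scinN} to deciding which $g_{(\alpha,n)}$ lie in $N$, then use mutual genericity of a product factorization to exclude every $g_{(\alpha,n)}$ with $\alpha>0$ when $n\in h$. The only difference is organizational: the paper (invoking the template of Claim~\ref{claim-noextragenerics}) works below a condition forcing $n\in h$, so that the relevant index set $I\setminus\{(\alpha,n)\;:\;\alpha>0\}$ already lies in $M$, whereas you make $h$ a parameter by first passing to $M[G\upharpoonright J_0]$ and then factoring the residual product there; both handle the generic dependence of the index set correctly.
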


\begin{proof}
By Claim~\ref{claim-scinN}, the only self-coding reals in $M[G]$ are the $g_{(\alpha,n)}$.  By definition of $H$, if $n  \not\in h$, every $g_{(\alpha,n)}$ is in $N = M[H]$, so there are many self-coding reals with base $n$.

If $n   \in h$, by an argument like the proof of Claim~\ref{claim-noextragenerics}, the only $g_{(\alpha,n)}$ in $N$ is $g_{(0,n)}$, so there is a unique self-coding real with base $n$ in $N$.

\end{proof}

\begin{claim}  Let $H' = \left< g_{\mathbb S} , g_{0,n}\right>_{n< \omega}$.  Then:
\begin{enumerate}
\item  $(Imp)^N = M[H']$.  In particular, $g_{\mathbb S} \in (Imp)^N$.
\item  $g_\mathbb S \not\in (Imp)^{M[H']}$.
\end{enumerate}
\end{claim}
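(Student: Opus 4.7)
The plan is to handle each part via almost-homogeneity, combined with a fine analysis of which self-coding reals appear at which level of $(Imp)^N$.

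Part (2) is immediate from Proposition~\ref{prop-almosthomog}: the forcing $\mathbb{S} \times \prod_{n<\omega} \mathbb{SC}_n$ producing $M[H']$ over $M$ is a product of almost-homogeneous factors and hence almost-homogeneous, so $(Imp)^{M[H']} \subseteq M$, and $g_\mathbb{S} \notin M$.

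For part (1), the inclusion $(Imp)^N \subseteq M[H']$ follows by realising $N$ as an almost-homogeneous extension of $M[H']$: since $h \in M[H']$, the index set $J = \{(\alpha,n) : \alpha > 0 \text{ and } n \notin h\}$ lies in $M[H']$, and $N$ is obtained from $M[H']$ by adjoining $\langle g_{(\alpha,n)} : (\alpha,n) \in J\rangle$, which is generic over $M[H']$ for the almost-homogeneous product $\prod_{(\alpha,n) \in J} \mathbb{SC}_n$. Proposition~\ref{prop-almosthomog} applied with base $M[H']$ gives the inclusion. For the reverse inclusion, since $H'$ is recoverable from its join $h$ and $M = L \subseteq (Imp)^N$, it suffices to show $h \in (Imp)^N$.

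To show $h \in (Imp)^N$, I would use Claim~\ref{claim-uniqueinN} together with the claim about $\varphi_{sc}$ to reformulate: $n \in h$ iff there is a unique $X \subseteq L_{\omega_1^L}$ in $N$ with $(L_{\omega_1^L},\in,X) \models \varphi_{sc}(n)$, using that $\omega_1^{L[x]} = \omega_1^L$ holds automatically for any real $x \in N$. My proposal is to locate $h$ at level $\omega_1^L + 3$ via the following implicit definition over $Imp_{\omega_1^L+2}^N$, with parameters $L_{\omega_1^L}$ and $Imp_{\omega_1^L + 1}^N$: declare $n \in H^*$ iff there exists $X \in Imp_{\omega_1^L + 1}$ with $X \subseteq L_{\omega_1^L}$ and $(L_{\omega_1^L},\in,X) \models \varphi_{sc}(n)$. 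For $n \in h$, the witness is $X = C(g_{(0,n)})$, which is the unique subset of $L_{\omega_1^L}$ in $N$ satisfying $\varphi_{sc}(n)$ and hence implicitly definable over $L_{\omega_1^L}$, placing it in $Imp_{\omega_1^L+1}^N$.

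The main obstacle is to establish the converse for $n \notin h$: the $\omega_1$-many candidate witnesses $C(g_{(\alpha,n)})$ in $N$ must all fail to be implicitly definable over $L_{\omega_1^L}$ in $N$, so that the formula does not misclassify $n$. I would argue this using the symmetry of the product forcing: for $n \notin h$ all $g_{(\alpha,n)}$ lie in $H$, and the permutations of $\{\mathbb{P}_{(\alpha,n)} : \alpha < \omega_1\}$ fixing everything else are automorphisms of $\mathbb{P}$ in $M$ that fix $L_{\omega_1^L}$ pointwise; combined with the observation that such permutations act invariantly on $N$ while moving the codes $C(g_{(\alpha,n)})$ among themselves, this should prevent any single candidate from being uniquely pinned down by a formula with parameters in $L_{\omega_1^L}$. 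Once this symmetry argument is completed, the proposed formula correctly identifies $h$, giving $h \in Imp_{\omega_1^L+3}^N$, and hence $M[H'] = M[h] \subseteq (Imp)^N$.
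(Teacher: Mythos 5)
Your part (1) is essentially the paper's argument: the inclusion $(Imp)^N \subseteq M[H']$ via realizing $N$ as an almost-homogeneous extension of $M[H']$, and the reverse inclusion by showing $n \in h$ iff some $X \in Imp_{\omega_1^L+1}$ satisfies $\varphi_{sc}(n)$, with the $n \in h$ direction witnessed by $C(g_{(0,n)})$ and the $n \notin h$ direction handled by a symmetry/genericity argument showing no $C(g_{(\alpha,n)})$ is implicitly definable over $L_{\omega_1^L}$. (The paper implements your symmetry idea concretely by copying the condition $p(\alpha,n)$ to a fresh coordinate $(\beta,n)$ to force a second witness $C(g_{(\beta,n)})$; your automorphism phrasing is equivalent. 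One caution: the almost-homogeneity of $\prod_{(\alpha,n)\in J}\mathbb{SC}_n$ that you invoke comes from permuting the $\omega_1$-many identical coordinates for each fixed $n$, \emph{not} from homogeneity of the individual factors --- see below.)

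Part (2) contains a genuine error. The forcing $\mathbb{SC}_n$ is \emph{not} almost-homogeneous, and neither is $\mathbb S \times \prod_{n<\omega}\mathbb{SC}_n$; your premise that it is "a product of almost-homogeneous factors" fails at the factors. Indeed, $\mathbb{SC}_n$ is designed to have a unique generic (coded into the $L$-degrees), so by Proposition~\ref{prop-unique} its generic lies in $Imp$ of the extension, which is incompatible with the conclusion of Proposition~\ref{prop-almosthomog}. Concretely, your asserted conclusion $(Imp)^{M[H']} \subseteq M$ is false: in $M[H']$ each $g_{(0,n)}$ is the unique self-coding real with base $n$, so $C(g_{(0,n)}) \in (Imp_{\omega_1^L+1})^{M[H']}$ and $g_{(0,n)} \in (Imp)^{M[H']}$. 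Your argument proves too much, and the failure matters because it is exactly the mechanism by which $(Imp)^N \supseteq M[H']$ in part (1). The correct route, which is the paper's, uses the product structure to factor $M[H']$ as a generic extension of the intermediate model $M[\left<g_{(0,n)} \mid n<\omega\right>]$ by the single almost-homogeneous factor $(\mathbb S)^M$, which is the factor adding $g_{\mathbb S}$; Proposition~\ref{prop-almosthomog} over that base gives $(Imp)^{M[H']} \subseteq M[\left<g_{(0,n)} \mid n<\omega\right>]$, which omits $g_{\mathbb S}$ by mutual genericity.
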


\begin{proof}  We know by Proposition~\ref{prop-HL} that in both $N$ and $M[H']$, we have $Imp_{\omega_1^L} = L_{\omega_1^L}$.

For part (1), by Claim~\ref{claim-uniqueinN}, if $n \in h$ then in $N$ there is a unique self-coding real with base $n$, and so there is a unique $X \subseteq \omega_1^L$ such that $(L_{\omega_1^L}, \in, X) \models \varphi_{sc}(n)$, namely $C(g_{0,n})$.  Hence, $C(g_{0,n}) \in Imp_{\omega_1^L + 1}$.  On the other hand, if $n \not\in h$, then we will show that $C(g_{\alpha,n}) \not\in Imp_{\omega_1^L + 1}$, hence there is no $X \in Imp_{\omega_1^L + 1}$ such that $(Imp_{\omega_1^L}, \in, X) \models \varphi_{sc}(n)$

To see this, suppose that $p \in \mathbb P$ and $p$ forces $ (L_{\omega_1^L}, \in, C(g_{\alpha,n}) \models \psi(a_1, \dots, a_k)$.  Since $\mathbb P$ is a product forcing, and $C(g_{\alpha,n})$ is defined from the $\mathbb P_{(\alpha,n)}$ generic, it must be the case that $p(\alpha,n)$, as a condition in $\mathbb P_{(\alpha,n)} = \mathbb S \mathbb C_n$, forces
$(L_{\omega_1^L}, \in, C(g) \models \ \psi(a_1, \dots, a_k)$.  We can extend $p$ to $p'$ such that, for some $(\beta, n) \not\in dom(p)$, we have $p'(\beta, n) = p(\alpha,n)$.
Thus, $p'$ forces $(L_{\omega_1^L}, \in, C(g_{\beta,n}) \models \ \psi(a_1, \dots, a_k)$.  This shows that $C(g_{(\alpha, n)}$ cannot be the unique $X$ such that $(L_{\omega_1^L}, \in, X) \models \ \psi(a_1, \dots, a_k)$, and therefore $C(g_{(\alpha,n)}) \not\in Imp_{\omega_1^L + 1}$.

This shows that  in $N$, the real $h$ is definable over $Imp_{\omega_1^L + 1}$, and therefore $h \in (Imp)^N$.  It follows, since $H'$ is constructible from $h$, that $M[H'] \subseteq (Imp)^N $.

To see the reverse inclusion, by Proposition~\ref{prop-almosthomog} it suffices to note that $N$ is obtained from $M[H']$ as a generic extension for an almost-homogeneous notion of forcing, namely
$$\overline{\mathbb P}  = \left\{ p \hook  \{ ( \alpha, n ) \;|\; 0 < \alpha < \omega_1 \;\&\; n \not \in h\} \;\big|\; p \in \mathbb P\right\}$$
(where $\mathbb P$ is defined in $M$).

\begin{comment1}
To see that $\overline{\mathbb P}$ is almost-homogeneous, let the restrictions $p = \left< p_s, \overline p \right>$ and $q = \left< q_s, \overline q \right>$ be any conditions in $\overline{\mathbb P}$, and let $\gamma < \omega_1$ be such that the supports of $\overline p$ and $\overline q$ are contained in $(\gamma \times \omega)$.  Then the permutation $\varphi$ of $\omega_1 \times \omega$ defined by
$$\varphi(\beta, n) =
\begin{cases}
( \beta, n)   & \text{ if } \beta=0 \text{ or }\beta \geq 2 \gamma;   \cr
( \gamma + m -1, n)    & \text{ if } \beta = m \text{ and } 0 < m < \omega;  \cr
( \gamma + \delta, n)   &   \text{ if } \beta =  \delta \text{ and } \omega \leq \delta < \gamma; \cr
( m + 1, n)   & \text{ if } \beta =  \gamma + m \text{ and } m < \omega;  \cr
( \delta, n)   &   \text{ if } \beta = \gamma + \delta \text{ and } \omega \leq \delta < \gamma; \cr
\end{cases}
$$
induces an automorphism $\overline \varphi$ of $\mathbb P$ that fixes $h$, $H'$, and $\overline{\mathbb P}$.  In $M[H']$, $\overline \varphi$ gives an automorphism of $\overline{\mathbb P}$ that sends $p \hook  \{ ( \alpha, n ) \;|\; 0 < \alpha < \omega_1 \;\&\; n \not \in h\}$ to a condition compatible with $q \hook  \{ ( \alpha, n ) \;|\; 0 < \alpha < \omega_1 \;\&\; n \not \in h\}$.

To see that $N$ is a $\overline{\mathbb P}$-generic extension of $M[H']$, note that $N = M[H'][H'']$ where
$$H'' =  \left< g_{\alpha,n} \,|\, {0 < \alpha< \omega_1 \, \& \,  n \not\in h)} \right>.$$
  Suppose that $D$ is a dense subset of $\overline{\mathbb P}$ in $M[H']$, and $ p = \left<p_s, \underline p \right> \in \mathbb P$.  Choose $q = \left< q_s, \underline q\right> \leq \left< p_s, \underline p \hook \omega \right>$ (so $q$ is a condition for adding $H'$) and $r \in \mathbb P$ such that $q$ forces
$$\overline r \in D \text{ and } \overline r \leq p \hook  \{ ( \alpha, n ) \;|\; 0 < \alpha < \omega_1 \;\&\; n \not \in h\}  \text{ where } \overline r = r \hook  \{ ( \alpha, n ) \;|\; 0 < \alpha < \omega_1 \;\&\; n \not \in h\}.$$
Note that if $ \alpha > 0$ and $r(\alpha, n) \not\leq p(\alpha, n)$, then $q \forces n \in h$.
Define $p' \leq p$ by $p' = \left< q_s, \underline p'\right>$ where
$$\underline p'( \alpha, n) =
\begin{cases}
q(\alpha, n)   & \text{ if } \alpha = 0 ;\cr
r(\alpha, n)   & \text{ if }  \alpha > 0 \text{ and } r(\alpha, n) \leq p(\alpha, n)  ;\cr
p(\alpha, n)   & \text{ otherwise.}    \cr
\end{cases}
  $$
Then $p'$ forces the generic filter adding $H''$ to meet $D$.
\end{comment1}

For part (2), $H'$ is generic over $M$ for the product forcing $\mathbb S \times \displaystyle\prod_{n \in\omega} \mathbb S \mathbb C_n$.  Since this is product forcing, $M[H']$ is a generic extension of $M[\left<g_{0,n} \, | \, {n \in\omega}\right>]$ by the almost-homogeneous forcing $(\mathbb S)^M$, which adds the generic real $g_\mathbb S$.  Therefore in $M[H']$, by Proposition~\ref{prop-almosthomog}, we have that $g_{\mathbb S} \not\in Imp$.

\end{proof}

This shows that in $N$ we have $g_{\mathbb S} \in Imp$ and $g_{\mathbb S} \not\in (Imp)^{Imp}$, proving the following theorem.

\begin{theorem}
\label{thm-ImpImpNotImp}
If $ZF$ is consistent, so is $ZFC + ((Imp)^{Imp} \neq Imp)$.
\end{theorem}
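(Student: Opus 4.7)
The plan is to observe that the construction and the two parts of the preceding claim already carry all the weight; the theorem itself is their immediate consolidation. Concretely, I would argue that in the model $N = M[H]$ built above, the distinguished Sacks real $g_{\mathbb S}$ lies in $(Imp)^N$ yet escapes $((Imp)^{Imp})^N$, directly witnessing $(Imp)^{Imp} \neq Imp$ inside $N$.

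First, I would invoke part~(1) of the preceding claim, which identifies $(Imp)^N$ with the intermediate model $M[H']$. Since $g_{\mathbb S}$ is one of the coordinates of $H'$, this gives $g_{\mathbb S} \in (Imp)^N$ at once. Second, I would apply part~(2) of the same claim: inside $M[H']$, the factor adjoining $g_{\mathbb S}$ is an almost-homogeneous Sacks forcing over $M[\left< g_{0,n}\right>_{n<\omega}]$, so by Proposition~\ref{prop-almosthomog} we have $g_{\mathbb S} \notin (Imp)^{M[H']}$. Rewriting $(Imp)^{M[H']}$ as $(Imp)^{(Imp)^N} = ((Imp)^{Imp})^N$ yields $g_{\mathbb S} \notin ((Imp)^{Imp})^N$, completing the separation.

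It then remains only to record that $N$ is itself a ZFC model: being of the form $M[H]$ for a well-orderable sequence $H$ of generics over the ZFC-model $M$, it inherits the axiom of choice, and standard absoluteness gives the remaining axioms. Hence $N \models ZFC + (Imp)^{Imp} \neq Imp$, which is the desired relative consistency over ZF.

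Because the entire technical burden has already been discharged in the preceding claim, no new obstacle appears at this final stage. Looking back, the genuine difficulty lay in part~(1) of that claim: engineering $H$ so that the predicate ``$n \in h$'' is implicitly definable over $Imp_{\omega_1^L + 1}$ via the formula $\varphi_{sc}$ --- which needs the uniqueness of a self-coding real with the appropriate base --- while simultaneously securing the reverse inclusion $(Imp)^N \subseteq M[H']$ through the almost-homogeneity of the quotient forcing $\overline{\mathbb P}$. Once that identification is in hand, the statement of the theorem is essentially a corollary.
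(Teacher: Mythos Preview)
Your proposal is correct and matches the paper's own argument essentially verbatim: the paper also derives the theorem in one line from the two parts of the preceding claim, observing that $g_{\mathbb S}\in (Imp)^N$ by part~(1) while $g_{\mathbb S}\notin (Imp)^{(Imp)^N}=(Imp)^{M[H']}$ by part~(2). Your added remarks about $N\models ZFC$ and about where the real work lies are accurate commentary but not additional mathematical content beyond what the paper does.
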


\section{Proof of Technical Lemma}
\label{sec-technical}

In this section we prove the technical lemma.

\begin{rlemma}
Suppose $M \models V=L$, the poset $\mathbb P \in M$ is as described in Section~\ref{sec-prelim}, and $G$ is $\mathbb P$-generic over $M$, where $G = \left< G_i \;|\; i \in I\right>$.  If $x$ is a real in $ M[G]$ and for all $i \in I$ we have $x \not\in M[G_i]$, then the $L$-degree of $x$ lies above at least two minimal (nonzero) $L$-degrees of reals.
\end{rlemma}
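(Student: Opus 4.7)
The plan is to prove the contrapositive: if the $L$-degree of $x$ fails to lie above two minimal nonzero $L$-degrees of reals, then $x \in M[G_i]$ for some $i \in I$. By Proposition~\ref{prop-degstruc}, each $\mathbb P_i$ is a countable-support iteration whose first factor $\mathbb Q_0 = \mathbb S$ contributes a single minimal nonzero $L$-degree $d_i$, realized by the first Sacks generic $g_i$ inside coordinate $i$, and every other $L$-degree added by $\mathbb P_i$ strictly dominates $d_i$. Mutual genericity across distinct coordinates makes the $d_i$ pairwise incomparable. If the $L$-degree of $x$ dominates no nonzero minimal degree then $x \in L \subseteq M[G_i]$ and we are done; otherwise it dominates exactly one such degree, which must be $d_{i_0}$ for some $i_0 \in I$, and the task reduces to proving $x \in M[G_{i_0}]$.

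Suppose, toward contradiction, that $p \forces \dot x \notin M[\dot G_i]$ for every $i$, while $p$ also forces the degree restriction of the previous paragraph. I run a product fusion below $p$: build a descending sequence $p = p_0 \geq p_1 \geq \cdots$ of conditions in $\mathbb P$, each with countable support contained in $\mathrm{supp}(p)$, where $p_{n+1}$ differs from $p_n$ only above the $n$-th splitting level of the Sacks trees sitting inside the coordinates, and only at finitely many coordinates drawn from a fixed enumeration of $\mathrm{supp}(p) \times \omega$. At each stage, for every active coordinate $i \ne i_0$, I refine $p_n(i)$ so that the two subtrees above the new splitting node force the same value for the $n$-th bit of $\dot x$. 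If this succeeds at every stage, the fusion limit $p_\infty$ forces $\dot x$ to be a function of $\dot G_{i_0}$ alone, so $p_\infty \forces \dot x \in M[\dot G_{i_0}]$, contradicting the choice of $p$.

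The scheme can fail only if, for some stage $n$ and some coordinate $i \ne i_0$, no further Sacks refinement of $p_n(i)$ removes the splitting of $\dot x$ across that coordinate. In that case the familiar Sacks-minimality argument, applied inside the quotient forcing that isolates coordinate $i$ from the remaining coordinates, produces a name recovering $g_i$ from $\dot x$ together with the other generics. Pushing this recovery back through the product then places the minimal degree $d_i$ below the $L$-degree of $x$, exhibiting a \emph{second} minimal nonzero $L$-degree beneath $x$ and contradicting the hypothesis. Hence the fusion goes through and the proof is complete.

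The main obstacle is that last step: turning a persistent splitting on coordinate $i$ into an actual $L$-computation of $g_i$ from $x$. For a single $\mathbb S$ this is the classical minimality of the Sacks generic, but here the coordinate sits inside a countable-support iteration whose later stages may be $\mathbb S \times \mathbb S$, and that iteration in turn sits inside a countable-support product. The bookkeeping needed to run the fusion across all active coordinates simultaneously while still allowing the single-coordinate minimality argument to operate modulo the other generics — via quotient forcings and a careful diagonal enumeration of splitting levels — is where the technical content of the section will concentrate.
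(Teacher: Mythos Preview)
Your overall strategy matches the paper's: a trichotomy followed by fusion arguments, with the failure case exhibiting a second minimal degree below $x$. The paper, however, organizes the trichotomy around the forcing-language predicate ``$x \notin M[\langle G_j \mid j \neq i\rangle]$'' (does this fail for zero, exactly one, or at least two values of $i$?) rather than around how many minimal degrees lie below $x$. This avoids the circularity in your setup, where the claims ``a real above no minimal degree lies in $L$'' and ``the unique minimal degree below $x$ must be some $d_{i_0}$'' are themselves consequences of the lemma being proved.

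The substantive gap is exactly the step you flag as hardest. Running Sacks minimality in the \emph{quotient} $\mathbb P_i$ over $M[\langle G_j \mid j \neq i\rangle]$ yields only $(g_i)_0 \in M[\langle G_j \mid j \neq i\rangle][x]$: the first Sacks real is recoverable from $x$ \emph{together with the other generics}, because the fusion producing that recovery lives over the larger ground model and uses its parameters. The lemma needs $(g_i)_0 \in L[x]$, with no parameters outside $L$. Your ``pushing this recovery back through the product'' is not a routine mutual-genericity fact, and the natural attempts to justify it invoke degree-structure statements (e.g.\ the four-element lattice for $\mathbb S \times \mathbb S$) that already presuppose what is being proved. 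The paper avoids this entirely: its Proposition~\ref{prop-aboveminimal} runs a single fusion in the full product $\mathbb P$ over $M$, using an amalgamation lemma (Lemma~\ref{lemma-amal}) to separate restrictions $p_{(\sigma,\vec s)}$ and $p_{(\tau,\vec s)}$ by values of $x$ whenever $\sigma$ and $\tau$ differ on the subsequence controlling the first Sacks step of coordinate $i$. Because the fused condition lies in $M = L$, the recovery of $(g_i)_0$ from $x$ uses only constructible parameters, and one obtains $(g_i)_0 \leq_L x$ directly. That direct fusion in $\mathbb P$, rather than a quotient argument followed by an unspecified transfer, is the idea your sketch is missing.
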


To establish notation and intuition, we begin by reviewing Sacks forcing.

Sacks forcing conditions are perfect trees, binary trees in which branching nodes are dense.  A subtree is a stronger condition.  The generic $G$ is equivalent to the generic real $g$, the unique real that is a branch through all the trees in $G$.

\begin{definition}  If $T \subseteq 2^{<\omega}$ is downward closed, and $\sigma \in T$, we say $\sigma$ \emph{splits} in $T$ if $\sigma\tie 0 \in T$ and $\sigma \tie 1 \in T$.  We may call $\sigma$ a \emph{splitting node} of $T$.

A \emph{perfect tree} is a downward closed $T \subseteq 2^{<\omega}$ such that for every $\sigma \in T$ there is some $\tau \supseteq \sigma$ that splits in $T$.

A branch of $T$ is $b \in 2^\omega$ such that, for all $n < \omega$, the restriction $b \hook n$ is an element of $T$.  The set of all branches of $T$ is denoted $[T]$.

Sacks forcing $\mathbb S$ has as conditions perfect trees, ordered by $T' \leq T$ ($T'$ is stronger than $T$) iff $T' \subseteq T$.

If $G \subseteq \mathbb S$ is $\mathbb S$-generic over $M$, then the $\mathbb S$-generic real $g$ is defined in $M[G]$ by $g = \bigcap \{ [T] \;|\;T \in G\}$.

\end{definition}

If $G \subseteq \mathbb S$ is $\mathbb S$-generic over $M$, then $G = \{ T \in (\mathbb S)^M \;|\; g \in [T]\}$.  Hence, $M[G] = M[g]$.

\begin{definition}  Let $T$ be a perfect tree.  The root, or stem, of $T$ is the shortest $\tau \in T$ that splits in $T$.

For $\sigma \in 2^n$, we define $rt_\sigma (T)$ by induction on $n$:

If $\sigma = \left< \right>$ is the empty sequence, then $rt_\sigma(T)$ is the root of $T$.

Inductively, $rt_{\sigma \tie i}(T)$ is the shortest $\tau \supseteq (rt_\sigma(T)) \tie i$ that splits in $T$.

\end{definition}

\begin{remark}  The collection of splitting nodes $\{rt_\sigma(T)\;|\; \sigma \in 2^{<\omega}\}$ comprises an isomorphic copy of the complete binary tree $2^{<\omega}$ inside $T$.

Any branch $b$ through $T$ is determined by $\{\sigma \;|\; rt_\sigma(T) \subset b\}$, and any branch $b'$ through $2^{\omega}$ determines a branch through $T$, given by the downward closure of $\{ rt_\sigma(T) \;|\; \sigma \subset b'\}$.

\end{remark}

We use the $rt_\sigma(T)$ to construct fusion sequences (defined below), an essential tool for Sacks forcing arguments.

\begin{definition}  For $T \in \mathbb S$ and $n < \omega$, the $n^{th}$ splitting level of $T$ is $S_n(T) = \{ rt_\sigma(T) \;|\; \sigma \in 2^n\}$.

If $T' \leq T$, and $S_m(T') = S_m(T)$ for all $m < n$, we say $T' \leq_n T$.

A \emph{fusion sequence} for $\mathbb S$ is a decreasing (with respect to the partial ordering) sequence of conditions $\left< T_m \;|\; m \in \omega\right>$ such that
$$(\forall n)(\exists k_n)(\forall m, m') \; (k_n \leq m \leq m' \;\Longrightarrow\; T_{m'} \leq_n T_m).$$
The \emph{fusion} of the sequence is $\bigcap \{ T_m \;|\; m < \omega\}$.

\end{definition}

\begin{remark}
The set $S_n(T)$ is a maximal antichain of nodes of $T$; any branch through $T$ extends exactly one element of $S_n(T)$.

$T' \leq_0 T$ iff $T' \leq T$.

The fusion $T$ of a fusion sequence $\left< T_m \;|\; m \in \omega\right>$ is a condition, $T = \displaystyle{\bigwedge_{m < \omega} T_m}$.
Furthermore, for all $m$ we have $T \leq T_m$, and for all $m \geq k_n$ we have $T \leq_n T_m \leq_n T_{k_n}$.

\end{remark}

Although $\mathbb S$ is not countably closed, we can use closure under fusions of fusion sequences in place of countable closure to prove, for example, that $\mathbb S$ does not collapse $\omega_1$, and that any function $f:\omega \to \omega$ in a generic extension by $\mathbb S$ is dominated by a function in the ground model.

Fusion sequences are also used to prove the property for which Sacks forcing was developed:  If $G$ is Sacks generic over $M$, then every element of $M[G]$ is either in $M$ or equivalent (equidefinable using parameters from $M$) to $G$.  In particular, if $M \models V=L$, then the generic real $g$ is of minimal (nonzero) degree of constructibility.

Typically, constructing a fusion sequence uses the notion of restriction:

\begin{definition}

If $\tau \in T$, then $T_\tau$, sometimes called the restriction of $T$ to $\tau$, is
$$\{ \rho \in T \;|\; \rho \subseteq \tau \;\text{or}\; \tau \subseteq \rho\}.$$

If $\sigma \in 2^{<\omega}$, then $T_{(\sigma)} = T_{rt_\sigma(T)}$.

\end{definition}

\begin{remark}  If $\sigma \in 2^n$, then $rt_\sigma(T) \in S_n(T)$ is the root of $T_{(\sigma)}$.

For each $n < \omega$, the collection $\{ [T_{(\sigma)}] \;|\; \sigma \in 2^n \}$ is a partition of the branches of $T$.  The collection $\{T_{(\sigma)} \;|\; \sigma \in 2^n\}$ is a maximal antichain of conditions extending $T$.

The condition $T$ forces that  exactly one element of $S_n(T) = \{ rt_\sigma(T) \;|\; \sigma \in 2^n \}$ is an initial segment of $g$, and $g$ is a branch through exactly one $T_{(\sigma)}$ for $\sigma \in 2^n$.

For all $T$, $T'$ in $\mathbb S$,  $$T' \leq_n T \;\Longleftrightarrow\; (\forall \sigma \in 2^n)\,( T'_{(\sigma)} \leq T_{(\sigma)}).$$

\end{remark}

It will be convenient to use this alternative characterization of $\leq_n$ when we generalize the definition to products and iterations.

\begin{remark}

  Suppose $D \subseteq \mathbb S$ is an open dense set.  Given $T$ and $n$, we can extend every $T_{(\sigma)}$ for $\sigma \in 2^n$ to a condition $S(\sigma) \in D$, and put those extensions together to form $S \leq_n T$ such that $S_{(\sigma)} = S(\sigma) \in D$ for each $\sigma \in 2^n$.  (Formally, $S = \displaystyle\bigcup_{\sigma \in 2^n} S(\sigma)$.)  Thus, $S$ forces the generic to contain one of finitely many elements $S_{(\sigma)}$ of $D$.

\begin{comment2}

If $D_n$ is an open dense set for each $n \in \omega$, we can begin with any condition $T$ and build a fusion sequence, $\left< T_n \;|\; n < \omega \right>$ with $T_0 =T$, $T_{n+1} \leq_n T_n$, and, for all $\sigma \in 2^n$, $(T_{n+1})_{(\sigma)} \in D_n$.  The fusion $S$ of this fusion sequence forces that, for each $n$, the generic contains one of finitely many elements $(T_{n+1})_{(\sigma)}$ of $D_n$.

If $x$ is a term for a function from $\omega$ to $M$, and $D_n$ is the set of conditions forcing a value for $x(n)$, then $S$ forces each $x(n)$ to lie within a finite set of possible values.  Hence, if $x : \omega \to \omega_1$, the range of $x$ is contained in some countable set in $M$; this shows $\mathbb S$ does not collapse $\omega_1$.  If $x : \omega \to \omega$, then $S$ determines a ground model function $f : \omega \to \omega$ such that $S$ forces $x$ to be dominated by $f$; $f(n)$ is an upper bound for the possible values of $x(n)$.

Suppose that $x$ is a term for an element of $2^\omega$ that is not in the ground model $M$.  To show $x$ is equivalent to the generic real $g$, construct a fusion sequence below any condition $T$, so that the fusion $S$ provides a method for using $x$ to determine $\{ \sigma \in 2^{<\omega} \;|\; S_{(\sigma)} \in G\}$ (and hence, to determine $g$).  To do this, when extending $R = T_n$ to $T_{n+1} \leq_n R$, instead of extending each individual $R_{(\sigma)}$ to lie in some dense set, extend each pair $R_{(\sigma)}$ and $R_{(\tau)}$ (for $\sigma \neq \tau$) to force contradictory facts about $x$ (that is, for some $k$, one forces $x(k)=0$ and the other forces $x(k) = 1$).  Then the resulting $\overline R$ will force that, for $\sigma \in 2^n$, we have $\overline R_{(\sigma)} \in G$ iff $\overline R_{(\sigma)}$ forces only correct facts about $x$; since $S \leq_n T_{n+1} \leq_n \overline R$, we have $S_{(\sigma)} \in G$ iff $S_{(\sigma)}$ forces only correct facts about $x$.  In this way $x$ recovers $\{ \sigma \;|\; rt_\sigma(S) \subset g\}$, and therefore $x$ recovers $g$.

We can always $\leq_n$-extend $R$ in this way:  Since $x$ is forced not to be in $M$, there must be some $k$ such that $R_{(\sigma)}$ does not decide the value of $x(k)$.  Then we can extend $R_{(\tau)}$ to decide the value of $x(k)$, and $R_{(\sigma)}$ to decide the opposite value.  Repeating this for all pairs $\sigma \neq \tau $ from $2^n$, we produce the desired $T_{n+1}$.

\end{comment2}

\end{remark}

To extend the fusion technique to products and iterations of $\mathbb S$, we use coordinatewise definitions of restrictions, fusion sequences, and fusions.

In particular, suppose $p = \left< p(i) \;|\; i \in I \right>$ is a condition.  To define an analogue of $p_{(\sigma)}$, we break up $\sigma$ into finitely many subsequences $\sigma_k$, for each $k$ choose a coordinate $i$, and replace $p(i)$ with the restriction $(p(i))_{(\sigma_k)}$.

To organize this, we introduce notation for finite and infinite joins.

\begin{definition}
\label{def-join}
For $x,y \in 2^\omega$, the \emph{join} of $x$ and $y$ is $x \oplus y$, defined by
$$(x\oplus y)(n) = \begin{cases} x(k) & \text{ if } n = 2k;\cr y(k) & \text{ if } n = 2k+1.\cr\end{cases}$$

We make a similar definition for $\sigma \in 2^m$ and $\tau \in 2^m$ or $\tau \in 2^{m-1}$:  $\sigma \oplus \tau$ has domain $2m$ in the first case, and $2m-1$ in the second, and
$$(\sigma \oplus \tau) (n) = \begin{cases} \sigma(k) & \text{ if } n = 2k;\cr \tau(k) & \text{ if } n = 2k+1.\cr\end{cases}$$

If $z \in 2^{\leq \omega}$, we can view $z$ as a join, and define its left and right parts:  If $z = x \oplus y$, then $\ell(z)=x$ and $r(z)=y$.

\end{definition}

\begin{definition}
\label{def-infjoin}

Let $[\, , \,] : \omega \times \omega \to \omega$ be a computable bijection, increasing in each coordinate, such that $[0,0] = 0$, $[0,1] = 1$, and otherwise $[m,n] > max(m,n)$.

If $\sigma$ is a sequence of length at most $\omega$, for each $n < \omega$ define the sequence $c(\sigma, n)$ by $c(\sigma, n) (m) = \sigma([n,m])$.  If $[n,m] $ is not in the domain of $\sigma$, then  $m$ is not in the domain of $c(\sigma, n)$.

The least $n$ such that, for $\sigma$ of length $k$ and all $m \geq n$, the domain of $c(\sigma, m)$ is empty, is denoted $W(k)$.

If, for each $n$,  $x_n$ is a sequence of length $\omega$, the join $\displaystyle\bigoplus_{n < \omega} x_n$ is the sequence $x$ defined by $x([n,m]) = x_n(m)$.

\end{definition}

The bijection $[ \, , \,]$ allows us to view a (possibly partial) function $\sigma$ on $\omega$ as a function on $\omega \times \omega$.  If we view $\omega \times \omega$ as a two-dimensional grid, then $c(\sigma, n)$ is the restriction of $\sigma$ to the $n^{th}$ column of the grid.

If $\sigma$ is a finite sequence, then $c(\sigma, n)$ is a finite sequence for all $n$, and for all but finitely many $n$ we have $c(\sigma, n) = \left< \right>$.  If we view $\sigma$ of length $k$ as a partial function on the $\omega \times \omega$ grid, then $W(k)$ is the width of the domain of $\sigma$, that is, the number of columns having nonempty intersection with the domain of $\sigma$.

Taking the join of $\left< x_n \;|\; n < \omega\right>$ is the reverse process, viewing each $x_n$ as a function on the $n^{th}$ column of the grid.

\begin{remark}
If $\sigma = \displaystyle\bigoplus_{n < \omega} x_n$, then $c(\sigma, n) = x_n$.

\end{remark}

For combinations of iterations and products of Sacks forcing over a model of $V=L$, we want to employ the method of fusion sequences to analyze the degrees of constructibility in the generic extension.

We define fusion sequences and fusions coordinatewise, with an inductive component to the definition in the case of iteration.

\begin{definition}

\begin{enumerate}

\item  A decreasing sequence $\left< p_m \;|\; m < \omega \right>$ in $\mathbb S \times \mathbb S$ is a fusion sequence if it is a fusion sequence in each coordinate; that is, if $p_m = (S_m, T_m)$, then both $\left< S_m \;|\; m < \omega \right>$ and $\left< T_m \;|\; m < \omega \right>$ are fusion sequences in $\mathbb S$.

Its fusion is the coordinatewise fusion, $\displaystyle{\bigwedge_{m < \omega}p_m = \left( \bigcap_{m < \omega}S_m, \, \bigcap_{m< \omega}T_m\right)}$.

\item  A fusion sequence for an iteration $\mathbb Q$ of length $\alpha$, and its fusion, are defined coordinatewise.  Inductively:

 A decreasing sequence $\left< p_m \;|\; m < \omega \right>$ is a fusion sequence if for all $\beta < \alpha$, the sequence  $\left< p_m\hook \beta \;|\; m < \omega \right>$ is a fusion sequence in $\left< \mathbb Q_\gamma \;|\; \gamma < \beta \right>$, and its fusion forces  $\left< p_m(\beta) \;|\; m < \omega \right>$ to be a fusion sequence in $\mathbb Q_\beta$.

The fusion of the sequence is the condition $p = \displaystyle{\bigwedge_{m < \omega} p_m}$ such that, for all $\beta < \alpha$, we have that $p \hook \beta = \displaystyle{\left(\bigwedge_{m < \omega} p_m\hook \beta\right)}$ and  $p(\beta)$ denotes the fusion $\displaystyle{\bigwedge_{m  < \omega} \left(p_m(\beta)\right)} $.

\item  A fusion sequence for $\mathbb P$, and its fusion, are defined coordinatewise:

A decreasing sequence $\left< p_m \;|\; m < \omega \right>$ is a fusion sequence if for all $i$ the sequence $\left< p_m(i) \;|\; m < \omega \right>$ is a fusion sequence for $\mathbb P_i$.

Its fusion is defined by
$\displaystyle{\left( \bigwedge_{m < \omega} p_m\right)(i) =  \bigwedge_{m < \omega} \left(p_m(i)\right)         }$.

\end{enumerate}

\end{definition}

As with $\mathbb S$, the fusion, or infimum, of a fusion sequence is a condition.

For constructing fusion sequences in this setting, we want to generalize the definitions of $T_{(\sigma)}$ and $\leq_n$.

For countable products and iterations, we can decompose $\sigma$ into subsequences $c(\sigma, n)$, and use a fixed enumeration $\{i(n) \,|\, n <\omega\}$ of the support of the product or iteration to make a coordinatewise definition of $p_{(\sigma)}$.

For an uncountable product or iteration, we define a notion $p_{(\sigma, \vec s)}$, where $\vec s$ identifies the coordinates to which we associate those subsequences $c(\sigma, n)$ that are nonempty.

\begin{definition}

\begin{enumerate}

\item  Suppose $p = (T_0, T_1) \in \mathbb S \times \mathbb S$.

For $\sigma \in 2^{<\omega}$, we define
$p_{(\sigma)} = ( (T_0)_{(\ell(\sigma))}, (T_1)_{(r(\sigma))})$.

We define $p \leq_n q$ iff $(\forall \sigma \in 2^n)\,(p_{(\sigma)} \leq q_{(\sigma)})$.  (As in Defintion~\ref{def-join}, $\ell(q_\sigma)$ and $r(\sigma)$ denote the left and right parts of $\sigma$.)

\item  For $\mathbb Q$
 of countable length $\alpha$, fix an enumeration $\{ \beta_m \;|\; m < \omega\}$ of $\alpha$, such that $\beta_0 = 0$.

 For $p \in \mathbb Q$ and $\sigma \in 2^{<\omega}$ we define $p_{(\sigma)}$ coordinatewise:  $p_{(\sigma)}(\beta_m)$ is a term for $(p(\beta_m))_{c(\sigma,m)}$.

 We define $p \leq_n q$ iff $(\forall \sigma \in 2^n)\,(p_{(\sigma)} \leq q_{(\sigma)})$.

\item  For $p \in \mathbb P$, $\sigma \in 2^n$, and $\vec s = \left< i(k) \;|\; k < \delta\right>$, where $W(n) \leq \delta \leq \omega$,
we define $p_{(\sigma, \vec s)}$ by $p_{(\sigma, \vec s)}(i(k)) = p(i(k))_{( c(\sigma, k )}$, and for $i \not\in \{i(k) \;|\; k < \delta\}$, we set $p_{(\sigma, \vec s)}(i) = p(i)$.

We define $p \leq_{n,\vec s} q$ iff $(\forall \sigma \in 2^n)\,(p_{(\sigma, \vec s)} \leq q_{(\sigma, \vec s)})$.

\end{enumerate}

\end{definition}

\begin{remark}  For $\mathbb S \times \mathbb S$ and $\mathbb Q$, if $p \in G$, then the sequence $\left< \sigma \;|\; p_{(\sigma)} \in G\right>$ is equivalent to the generic $G$.

For $\mathbb P$, if $\vec s = \left< i(k) \;|\; k < \omega \right>$ and $p \in G$, the sequence $\left< \sigma \;|\; p_{(\sigma, \vec s)} \in G\right>$ is equivalent to the portion of the generic $\left< G_{i(k)} \;|\; k < \omega \right>$.

This is also true coordinatewise:  In $\mathbb S \times \mathbb S$, from $\{ \ell(\sigma) \;|\; p_{(\sigma)} \in G\}$ and $p$, we can recover $g_0$, and similarly for $g_1$.  In $\mathbb Q$, from $\{ c(\sigma, k) \;|\; p_{(\sigma)} \in G\}$, $G \hook \beta_k$, and $p$, we can recover $g_{\beta_k}$.  In $\mathbb P$, from $\{ c(\sigma, k) \;|\; p_{(\sigma)} \in G\}$ and $p$, we can recover $g_{i(k)}$.

For $\mathbb S \times \mathbb S$ and $\mathbb Q$, if $\vec p = \left< p_m \;|\; m < \omega\right>$ is a sequence of conditions such that
$$(\forall n)(\exists k_n)(\forall m, m') \; (k_n \leq m \leq m' \;\Longrightarrow\; p_m' \leq_n p_m),$$
then $\vec p$ is a fusion sequence.  Furthermore, if $p$ is its fusion, then for all $m \geq k_n$ we have $p \leq_n p_m$.

For $\mathbb P$, if $\vec p = \left< p_m \;|\; m < \omega\right>$ is a sequence of conditions and $\vec s = \left< i(k) \;|\; k <\omega\right>$ is an enumeration of $\displaystyle\bigcup \{ supp(p_m) \;|\; m < \omega\}$ such that
$$(\forall n)(\exists k_n)(\forall m, m') \; (k_n \leq m \leq m' \;\Longrightarrow\; p_m' \leq_{(n, \vec s)} p_m),$$
then $\vec p$ is a fusion sequence.  Furthermore, if $p$ is its fusion, then for all $m \geq k_n$ we have $p \leq_{(n, \vec s)} p_m$.

Note, in this case, that $p \leq_{(n,\vec s)}q $ is equivalent to $p \leq_{(n,\vec s \hook m)}q $, for any $m\geq W(n)$.  We will use this in constructing fusion sequences, when we may need to find $p \leq_{(n,\vec s)}q $ although only some initial segment of $\vec s$ has been defined.

To produce $r \leq_n p$ such that the restrictions $r_{(\sigma)}$ (or $r_{(\sigma , \vec s)}$) for $\sigma \in 2^n$ all have some given property, we wish, as in the case of Sacks forcing, to extend each $p_{(\sigma)}$ individually, and then put the results together to form $r$.  However, we can no longer extend the $p_{(\sigma)}$ independently; extending $p_{(\sigma)}$ generally changes $p_{(\tau)}$ for $\tau \neq \sigma$.  To facilitate extending the $p_{(\sigma)}$ sequentially, for $q \leq p_{(\sigma)}$ we define the amalgamation of $q$ into $p$ above $\sigma$, essentially the result of extending $p_{(\sigma)}$ and then plugging the extension $q$ back into $p$.

The amalgamation of $q$ into $p$ above $\sigma$ will be the maximal (weakest) $r \leq_n p$ such that $r_{(\sigma)} = q$.

\end{remark}

\begin{definition}

\begin{enumerate}

\item[0.]  For $\sigma \in 2^n$, $T \in \mathbb S$, and $S \leq T_{(\sigma)}$, the amalgamation of $S$ into $T$ above $\sigma$ is $Am_\sigma (T,S) = \displaystyle{ S \cup \bigcup\{T_{(\tau)} \;|\; \tau \in 2^n \; \& \; \tau \neq \sigma \} }$.

\item For $\sigma \in 2^n$, and $\mathbb S \times \mathbb S$  conditions  $p = (T_0, T_1)$ and $q = (S_0, S_1) \leq p_{(\sigma)}$, we define $Am_\sigma(p,q)$, the amalgamation of $q$ into $p$ above level $n$, to be the coordinatewise amalgamation
$ ( Am_{\ell(\sigma)}( T_0 , S_0 ) ,\,  Am_{r(\sigma)}(T_1, S_1 )) $.

\item
For $\sigma \in 2^n$, $p \in \mathbb Q$, and $q \leq p{(\sigma)}$, we define $Am_\sigma(p,q)$, the amalgamation of $q$ into $p$ above $\sigma$, inductively:  Letting $r$ denote $Am_\sigma(p,q)$,
$$r(\beta_m) = \begin{cases}   Am_{c(\sigma,m)} (p(\beta_m), q(\beta_m))  &  \text{ if }(\forall \beta_k < \beta_m) \, ( p(\beta_k)_{(c(\sigma, k) )} \in G_{\beta_k} ) ) ; \cr
 p(\beta_m) &  \text{ otherwise. } \cr \end{cases}$$

\item
For $p \in \mathbb P$, $\sigma \in 2^n$, $\vec s = \left< i(k) \;|\; k < \delta\right>$ (with $\delta \geq W(n)$), and $q \leq p_{(\sigma, \vec s)}$, the amalgamation of $q$ into $p$ above $(\sigma, \vec s)$ is defined to be the coordinatewise amalgamation:  Letting $r$ denote the amalgamation
$Am_{(\sigma, \vec s)}(p,q)$, we define $r(i(k)) = Am_{( c(\sigma, k) )} (p(i(k)), q(i(k)))$, and for $i \not\in \{i(k) \;|\; k < \delta\}$, we set $r(i) = q(i)$.

\end{enumerate}
\end{definition}

\begin{remark}
\label{remark-res-amal}  In the case of $\mathbb S$, for $\sigma \in 2^n$, and $R = Am_\sigma(T,S)$, we have $R_{(\sigma)} = S$, and for $\tau \in 2^n$ with $\tau \neq \sigma$, we have $R_{(\tau)}= T_{(\tau)}$.

In the case of $\mathbb S \times \mathbb S$, for $\sigma \in 2^n$, $q  \leq p_{(\sigma)}$, and $r = Am_\sigma(p,q)$, we have $r_{(\sigma)} = q$.  For $\tau \in 2^n$ with $\tau \neq \sigma$, we have $r_{(\tau)} \leq p_{(\tau)}$, but we do not in general have $r_{(\tau)} = p_{(\tau)}$.  (However, if $\ell(\tau) \neq \ell(\sigma)$, we have equality in the first coordinate, and if $r(\tau) \neq r(\sigma)$, we have equality in the second coordinate.)

In the case of $\mathbb Q$ as in clause (2) above, for $\sigma \in 2^n$, $q  \leq p_{(\sigma)}$, and $r = Am_\sigma(p,q)$, we have $r_{(\sigma)} = q$.  For $\tau \in 2^n$ with $\tau \neq \sigma$, we have $r_{(\tau)} \leq p_{(\tau)}$, but we do not in general have $r_{(\tau)} = p_{(\tau)}$.  However, we do have the following (which will be used later):  If $c(\sigma, 0) \neq c(\tau, 0)$, then $r_{(\tau)} = p_{(\tau)}$.

\begin{comment3}
  An illustrative case is the two-step iteration of Sacks forcing $\mathbb Q = \left< \mathbb Q_0, \mathbb Q_1\right>$, where $\mathbb Q_0$ and $\mathbb Q_1$ are both Sacks forcing.  A condition in $\mathbb Q$ is a pair $ p =\left< T, T' \right>$, where $T$ is a perfect tree in $M$, and $T'$ is a term for a perfect tree in $M[G_0]$. Suppose that $\sigma  = \left< 0, 0  \right> = \left< 0 \right> \oplus \left< 0 \right>$, and $q = \left< S, S'\right> \leq p_{(\sigma)} = \left< T_{(\left< 0 \right>)}, T'_{(\left< 0 \right>)} \right>$.  (For purposes of illustration we are using the pairwise join, rather than the infinite join, to decompose $\sigma$.)  If $r$ is the amalgamation of $q$ into $p$ above $\sigma$, then

$r_{( \left< 0\right> \oplus \left< 0 \right> )} = r_{(\sigma)} = \left<  S, S' \right> = q$,

$r_{( \left< 0\right> \oplus \left< 1 \right> )} =  \left<  S, T'_{(\left< 1\right>)}  \right> $,

$r_{( \left< 1\right> \oplus \left< 0 \right> )} =  \left< T_{(\left< 1\right>)}  , T'_{(\left< 0\right>)}  \right>  = p_{( \left< 1\right> \oplus \left< 0 \right>)}$, and

$r_{( \left< 1\right> \oplus \left< 1 \right> )} =  \left<T_{(\left< 1\right>)}  , T'_{(\left< 1\right>)}  \right>  = p_{( \left< 1\right> \oplus \left< 1 \right>)}$.

\noindent This is precisely what is needed for $r \leq_2 p$ with $r_{(\sigma)} = q$ to be maximal (as weak as possible).  Here $\ell(\tau)$ is playing the role of $c(\tau,0)$, and where $\ell(\tau) = \left< 1 \right> \neq \ell(\sigma)$, we have $r_{(\tau)} = p_{(\tau)}$, as claimed.

\end{comment3}

In the case of $\mathbb P$ as in clause (3) above, for $q  \leq p_{(\sigma, \vec s)}$, and $r = Am_{(\sigma, \vec s)}(p,q)$, we have $r_{(\sigma,\vec s)} = q$.  For $\tau \in 2^n$ with $\tau \neq \sigma$, we have $r_{(\tau,\vec s)} \leq p_{(\tau ,\vec s)}$, but we do not in general have $r_{(\tau, \vec s)} = p_{(\tau, \vec s)}$.  However, derived from the above, we do have the following (which will be used later):

Suppose $i = i(k)$ for some $k < \delta$, and for $\tau \in 2^n$ we set $b(\tau) = c(c(\tau, k),0)$.  (That is, $b(\tau)$ is the subsequence of $\tau$ that determines the initial path of the generic real for $(\mathbb P_i)_0$.)  If $b(\tau) \neq b(\sigma)$ then $r_{(\tau,\vec s)}(i) = p_{(\tau,\vec s)}(i)$.

\end{remark}

\begin{lemma}
\label{lemma-amal}
  Let $M \models V=L$.

  In $M$, let $\mathbb P = \displaystyle\prod_{i \in I} \mathbb P_i$ be a countable-support product where each $\mathbb P_i$ is a countable-support iteration $\left< \mathbb Q_\beta \;|\; \beta <\alpha_i \right>$ of countable length $\alpha_i$, such that each $\mathbb Q_\beta$ is either $\mathbb S$ or $\mathbb S \times \mathbb S$ (which one may depend on the generic sequence below $\beta$).  Let $x$ be a $\mathbb P$-term for a function from $\omega$ to the ordinals.

Let $i \in I$, and $p \in \mathbb P$, such that $p \forces x \not\in M[\left< G_j \;|\; j \neq i \right>]$.  Let $n < \omega$, and $\vec s = \left< i(k) \;|\; k < \delta\right>$ be a finite sequence from $I$ of length at least $W(n)$, with $i(0) = i$.

For $\tau \in 2^n$, set $b(\tau) = c(c(\tau, 0),0)$.
Suppose $\sigma, \tau \in 2^n$ and $b(\tau) \neq b(\sigma)$.  Then there are a condition $r \leq_{n, \vec s} p$, a number $d < \omega$, and ordinals $\gamma \neq \gamma'$ such that $r_{(\sigma, \vec s)} \forces x(d) = \gamma$ and $r_{(\tau, \vec s)} \forces x(d) = \gamma'$.

\end{lemma}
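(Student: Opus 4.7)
The plan is to construct $r$ by a two-stage extension-and-amalgamation argument. The key combinatorial fact is Remark~\ref{remark-res-amal}: amalgamation into $p$ above $(\sigma,\vec s)$ on coordinate $i = i(0)$ only modifies the $b(\sigma)$-branch of coordinate $i$'s stage-$0$ tree; so whenever $b(\tau) \neq b(\sigma)$ (equivalently $c(\sigma,0) \neq c(\tau,0)$), the restriction at coordinate $i$ on the $\tau$-branch is undisturbed. This independence will let us extend the $\sigma$- and $\tau$-branches essentially in parallel and then splice them together.

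First I would extend $p_{(\sigma,\vec s)}$ to some $q_\sigma$ deciding $x(d_0) = \gamma$ for a chosen $d_0$ and $\gamma$, and set $p^* = Am_{(\sigma,\vec s)}(p,q_\sigma)$. By Remark~\ref{remark-res-amal} we have $p^* \leq_{n,\vec s} p$, $p^*_{(\sigma,\vec s)} = q_\sigma$, and, because $b(\tau) \neq b(\sigma)$, the coordinate-$i$ component is preserved: $p^*_{(\tau,\vec s)}(i) = p_{(\tau,\vec s)}(i)$ (on other coordinates of $\vec s$, $p^*_{(\tau,\vec s)}$ may only become stronger). Next I would use the hypothesis $p \forces x \not\in M[\langle G_j : j \neq i\rangle]$ to find an extension $q_\tau \leq p^*_{(\tau,\vec s)}$ forcing $x(d_0) = \gamma'$ with $\gamma' \neq \gamma$. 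Setting $r = Am_{(\tau,\vec s)}(p^*, q_\tau)$, Remark~\ref{remark-res-amal} applied the other way gives $r \leq_{n,\vec s} p^* \leq_{n,\vec s} p$, with $r_{(\tau,\vec s)} = q_\tau$ forcing $x(d_0) = \gamma'$ and $r_{(\sigma,\vec s)} \leq p^*_{(\sigma,\vec s)} = q_\sigma$ still forcing $x(d_0) = \gamma$. Transitivity of $\leq_{n,\vec s}$ then yields the desired $r$.

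The main obstacle is producing $q_\tau$ that forces a value $\gamma' \neq \gamma$. The hypothesis $p \forces x \not\in M[\langle G_j : j\neq i\rangle]$ propagates to any extension, and in particular to $p^*_{(\tau,\vec s)}$. A standard product-forcing consequence of this is that below $p^*_{(\tau,\vec s)}$ there must be some $d$ for which the value of $x(d)$ is not determined by the restriction of $G$ to coordinates other than $i$: otherwise $x$ itself would be definable in $M[\langle G_j : j\neq i\rangle]$. Hence there exist extensions of $p^*_{(\tau,\vec s)}$ agreeing on coordinates $\neq i$ but forcing different values of $x(d)$; at least one such value differs from $\gamma$. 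To force that $d$ can be taken to equal the original $d_0$, I would run a brief preliminary diagonal argument to choose $d_0$ at the outset as a witness to the failure of $x \in M[\langle G_j : j\neq i\rangle]$, rather than picking $d_0$ arbitrarily before defining $q_\sigma$. Once $d_0$ is chosen in this way, the two-stage amalgamation proceeds as outlined, and the lemma follows.
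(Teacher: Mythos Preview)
Your two-stage amalgamation architecture is exactly the paper's, and you have correctly identified the crucial input from Remark~\ref{remark-res-amal}: since $b(\sigma)\neq b(\tau)$, amalgamating on the $\sigma$-side leaves the $i$-coordinate of the $\tau$-side untouched. The gap is in your choice of $d$. You first fix $d_0$, then extend on the $\sigma$-side to decide $x(d_0)=\gamma$, and only afterward try to produce a $\tau$-side extension forcing a different value of $x(d_0)$. But the witness $d$ you extract from the hypothesis $x\notin M[\langle G_j:j\neq i\rangle]$ below $p^*_{(\tau,\vec s)}$ is some $d$, not your pre-chosen $d_0$; and your proposed fix, ``choose $d_0$ at the outset as a witness,'' is circular, because the relevant witness lives below $p^*_{(\tau,\vec s)}$, which already depends on $q_\sigma$, which in turn depends on $d_0$.

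The paper avoids this by running the non-membership hypothesis on the $\sigma$-side rather than the $\tau$-side, and by choosing $d$ and $q_\sigma$ \emph{simultaneously} via a short contradiction argument. Suppose that for \emph{every} $d$ and every $q\leq p_{(\sigma,\vec s)}$ with $q\Vdash x(d)=\gamma$, the amalgamated $\tau$-restriction $\bigl(Am_{(\sigma,\vec s)}(p,q)\bigr)_{(\tau,\vec s)}$ also forces $x(d)=\gamma$. The point is that this $\tau$-restriction has $i$-coordinate $p_{(\tau,\vec s)}(i)$ regardless of $q(i)$, so it is computable from $p$ and $q\upharpoonright\{j:j\neq i\}$ alone. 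Hence, below $p_{(\sigma,\vec s)}$, the value $x(d)$ is determined by $G\upharpoonright\{j:j\neq i\}$ (search for $q$ in that part of the generic whose associated $\tau$-restriction decides $x(d)$), contradicting the hypothesis. So there exist $d,\gamma,q$ with $q\Vdash x(d)=\gamma$ but $\bigl(Am_{(\sigma,\vec s)}(p,q)\bigr)_{(\tau,\vec s)}\not\Vdash x(d)=\gamma$; now a single extension on the $\tau$-side to some $q'\Vdash x(d)=\gamma'\neq\gamma$ exists trivially, and your second amalgamation finishes the job. Replacing your ``preliminary diagonal argument'' with this contradiction step closes the gap.
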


\begin{proof}  Suppose by way of contradiction that, for all $d < \omega$, and all $q \leq p_{\sigma, \vec s}$ such that $q \forces x(d) = \gamma$, if $r = Am_{\sigma, \vec s}(p,q)$, we have $r_{(\tau, \vec s)} \forces x(d) = \gamma$.  By Remark~\ref{remark-res-amal}, since $r_{(\tau,\vec s)}(i) = p_{(\tau,\vec s)}(i)$ (and since amalgamation is defined coordinatewise), the condition $r_{(\tau, \vec s)}$ is determined by $p$ and $q \hook \{j \;|\; j \neq i\}$.  Specifically, $r_{(\tau, \vec s)} = \left( Am_{(\sigma, \vec s)} (p, \overline q)  \right)_{(\tau, \vec s)}$, where $ \overline q(i) = p_{(\tau,\vec s)}(i)$ and $ \overline q(j) = q(j)$ for $j \neq i$.

Then $p_{\sigma, \vec s}$ forces that $x(d) = \gamma$ if and only if there is $ q \in G\hook\{ j \;|\; j \neq i\}$ such that $\left(Am_{\sigma, \vec s}(p, \overline q) \right)_{(\tau, \vec s)} \forces x(d) = \gamma$, where $ \overline q(i) = p_{(\tau,\vec s)}(i)$ and $ \overline q(j) = q(j)$ for $j \neq i$.  But this contradicts $p \forces x \not\in M[\left< G_j \;|\; j \neq i \right>]$.

 Hence, we can find $d < \omega$, $\gamma$, and $q \leq p_{(\sigma, \vec s)}$, such that $q \forces x(d) = \gamma$, but $\left(Am_{\sigma, \vec s}(q,p)\right)_{(\tau, \vec s)} \not\forces x(d) = \gamma$.  Choose $q' \leq \left(Am_{\sigma, \vec s}(q,p)\right)_{(\tau, \vec s)}$ such that $q' \forces x(d) = \gamma'$ for $\gamma ' \neq \gamma$, and let $r = Am_{\tau, \vec s}(Am_{\sigma, \vec s}(p,q), q')$.  Then $r_{(\tau, s)} = q' \forces x(d) = \gamma'$, and $r_{(\sigma ,\vec s)} \leq \left(Am_{\sigma, \vec s}(p,q)\right)_{\sigma, \vec s} = q \forces x(d) = \gamma$, as desired.

\end{proof}

\begin{proposition}
\label{prop-aboveminimal}
Let $M \models V=L$.

  In $M$, let $\mathbb P = \displaystyle\prod_{i \in I} \mathbb P_i$ be a countable-support product where each $\mathbb P_i$ is a countable-support iteration $\left< \mathbb Q_\beta \;|\; \beta <\alpha_i \right>$ of countable length $\alpha_i$, such that each $\mathbb Q_\beta$ is either $\mathbb S$ or $\mathbb S \times \mathbb S$ (which one may depend on the generic sequence below $\beta$).  Let $x$ be a term for a function from $\omega$ to the ordinals.

Let $i \in I$, and $p \in \mathbb P$, such that $p \forces x \not\in M[\left< G_j \;|\; j \neq i \right>]$.  Then $p \forces (G_i)_0 \leq_L x$.

\end{proposition}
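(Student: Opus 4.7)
The plan is to mimic the classical Sacks-minimality fusion argument, with Lemma~\ref{lemma-amal} substituting for the standard fact that distinct $\sigma \neq \tau$ can be forced to disagree about $x$. By a routine density argument it suffices, given any $p' \leq p$, to exhibit some $q \leq p'$ forcing $(G_i)_0 \in L[x]$, since the hypothesis $x \notin M[\langle G_j : j \neq i\rangle]$ is inherited by $p'$.

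First I would fix $\vec s = \langle i(k) : k < \omega\rangle$ enumerating a countable subset of $I$ containing $\mathrm{supp}(p') \cup \{i\}$ with $i(0) = i$, and set $b(\tau) = c(c(\tau,0),0)$, so that $b$ extracts precisely the portion of $\tau$ that determines the path through the first-step Sacks condition $p'(i)(0)$. I would then build a fusion sequence $\langle p_n : n < \omega\rangle$ with $p_0 = p'$ and $p_{n+1} \leq_{n,\vec s} p_n$, handling at stage $n$ the finitely many unordered pairs $\{\sigma,\tau\} \subseteq 2^n$ with $b(\sigma) \neq b(\tau)$ one at a time. For each such pair, Lemma~\ref{lemma-amal} (whose hypothesis is exactly $b(\sigma) \neq b(\tau)$) yields an index $d_{n,\{\sigma,\tau\}}$, ordinals $\gamma_{n,\sigma} \neq \gamma_{n,\tau}$, and a $\leq_{n,\vec s}$-refinement whose $(\sigma,\vec s)$- and $(\tau,\vec s)$-restrictions force $x(d_{n,\{\sigma,\tau\}})$ to the values $\gamma_{n,\sigma}$ and $\gamma_{n,\tau}$ respectively. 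Processing pairs sequentially preserves earlier commitments because each successive $\leq_{n,\vec s}$-refinement only further strengthens all the $(\sigma,\vec s)$-restrictions; let $p_{n+1}$ be the condition obtained at the end of stage $n$, and let $q$ be the fusion.

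For any $G \ni q$, the conditions $\{q_{(\sigma,\vec s)} : \sigma \in 2^n\}$ are pairwise incompatible and form a maximal antichain below $q$, so $G$ meets exactly one $q_{(\sigma^*_n,\vec s)}$, and the $\sigma^*_n$ cohere as $n$ grows. In $M[G]$, define, using $x$ together with the parameters $q$, $\vec s$, $\langle d,\gamma\rangle$ (all in $L$),
\[ A_n = \{\sigma \in 2^n : \forall \tau \in 2^n,\ b(\tau)\neq b(\sigma) \Rightarrow x(d_{n,\{\sigma,\tau\}}) = \gamma_{n,\sigma}\}. \]
Clearly $\sigma^*_n \in A_n$; and if some $\sigma \in A_n$ had $b(\sigma) \neq b(\sigma^*_n)$, applying the defining condition to the pair $\{\sigma,\sigma^*_n\}$ from both sides would force two distinct values on $x(d_{n,\{\sigma,\sigma^*_n\}})$, a contradiction. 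Hence $b$ is constant on $A_n$ with value $b(\sigma^*_n)$, and this value is the initial segment of $(g_i)_0$ through the splitting nodes of $q(i)(0)$, of length tending to infinity. Consequently $(g_i)_0 \in L[x]$, so $q \Vdash (G_i)_0 \leq_L x$, completing the density argument.

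The main obstacle is the bookkeeping in the fusion step: because $\leq_{n,\vec s}$-refinements are produced by coordinatewise amalgamation rather than by independent modifications of different restrictions, handling one pair $\{\sigma,\tau\}$ could in principle disturb the commitments already enforced for earlier pairs. This is precisely what Remark~\ref{remark-res-amal} rules out: amalgamating above $(\sigma,\vec s)$ only refines the other $(\tau,\vec s)$-restrictions, so every forcing commitment placed on them persists down the construction and into $q$.
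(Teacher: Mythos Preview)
Your approach is essentially the paper's: build a fusion sequence using Lemma~\ref{lemma-amal} to separate the restrictions $(p_n)_{(\sigma,\vec s)}$ and $(p_n)_{(\tau,\vec s)}$ whenever $b(\sigma) \neq b(\tau)$, and then read off $(g_i)_0$ from $x$ via the recorded separation data. Your recovery argument through the sets $A_n$ is correct and amounts to the paper's criterion (``$(p_m)_{(\tau,\vec s)}$ forces no incorrect facts about $x$'').

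There is, however, one genuine gap. You fix $\vec s$ at the outset as an enumeration of some countable set containing $\mathrm{supp}(p') \cup \{i\}$. But each application of Lemma~\ref{lemma-amal} produces a condition of the form $r = Am_{(\tau,\vec s)}(Am_{(\sigma,\vec s)}(p,q),q')$, where $q$ and $q'$ are chosen to decide values of the name $x$; since $x$ is a name for the full product $\mathbb P$, nothing prevents the supports of $q$ and $q'$ from reaching beyond the range of your fixed $\vec s$, and by the definition of amalgamation (on coordinates outside $\vec s$ one simply copies $q$) those new coordinates land in $r$. For such a coordinate $j$ the relation $p_{n+1} \leq_{n,\vec s} p_n$ gives only $p_{n+1}(j) \leq p_n(j)$, so $\langle p_n(j) : n < \omega\rangle$ need not be a fusion sequence in $\mathbb P_j$, and the coordinatewise ``fusion'' may fail to be a condition. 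The paper handles this by building $\vec s$ dynamically alongside the $p_m$: at stage $m$ one chooses both $p_m$ and $i(m)$, using a bookkeeping (diagonalization) scheme so that the range of $\vec s$ ends up being exactly $\bigcup_m \mathrm{supp}(p_m)$. With that adjustment your argument goes through unchanged.
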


\begin{proof}  We define a fusion sequence $\left< p_m \;|\; m < \omega\right>$ and an enumeration $\vec s = \left< i(k) \;|\; k < \omega \right>$ of $\displaystyle\bigcup_{m < \omega} supp(p_m)$, such that from the fusion $\displaystyle\bigwedge p_m$, the sequence $\vec s$, and $x$, we can recover the generic real $(g_i)_0$ for $(\mathbb Q_i)_0$.

At step $m$ we define $p_m$ and $i(m)$, using a diagonalization strategy to insure the range of $\vec s$ is $\displaystyle\bigcup_{m < \omega} supp(p_m)$.

We will guarantee we have a fusion sequence by making $p_{m+1} \leq_{m+1, \vec s \hook (m+1)} p_m$.

Let $p_0 = p$, and $i(0) = i$.

Inductively, suppose $p_m$ and $\vec s_m = \left< i(0), \dots, i(m)\right>$ have been defined, with the following property:  Say that conditions $q$ and $r$ are separated by $x$ if there are a number $d < \omega$ and ordinals $\gamma \neq \gamma'$ such that $ p \forces x(d) = \gamma$ and $q \forces x(d) = \gamma'$.  For $\sigma \in 2^m$, as in Lemma~\ref{lemma-amal}, define $b(\sigma) = c(c(\sigma, 0),0)$.  Then, for all $\sigma, \tau \in 2^m$, if $b(\sigma) \neq b(\tau)$, then $(p_m)_{(\sigma, \vec s_m)}$ and $(p_m)_{(\tau, \vec s_m)}$ are separated by $x$.

Choose $i(m+1)$ according to our diagonalization strategy.

By Lemma~\ref{lemma-amal}, for any $\sigma, \tau \in 2^{m+1}$ with $b(\sigma) \neq b(\tau)$, there is $r \leq_{m+1, \vec s \hook (m+1)} p_m$ such that $r_{(\sigma, {\vec s_{m+1}})}$ and $r_{(\tau, {\vec s_{m+1}})}$ are separated by $x$.   Therefore, by a finite iteration of choosing $\leq_{m+1}$ extensions, we may choose $p_{m+1} \leq_{m+1, \vec s \hook (m+1)} p_m$ such that, for all $\sigma, \tau \in 2^{m+1}$ with $b(\sigma) \neq b(\tau)$, we have that
$(p_{m+1})_{(\sigma, {\vec s_{m+1}})}$ and $(p_{m+1})_{(\tau, {\vec s_{m+1}})}$ are separated by $x$.

Let $q$ be the fusion $\displaystyle\bigwedge p_m$.  Then, $q$ forces $x$ to recover the generic real $(g_i)_0$ for $(\mathbb Q_i)_0$ as follows:  Let $\rho \in 2^n$, and $m$ such that if $\tau \in 2^m$ then $b(\tau) \in 2^n$.  Then $p_m$ forces that $rt_{\rho}((p_m(i))(0)) \subset (g_i)_0$
iff there is $\tau \in 2^m$ with $b(\tau) = \rho$ such that $(p_m)_{( \tau, \vec s \hook m)}$ forces no incorrect facts about $x$; that is, whenever $(p_m)_{( \tau, \vec s \hook m)} \forces x(d) = \gamma$, then in fact $x(d) = \gamma$.

\end{proof}

\begin{proposition}
\label{prop-technical}

  In $M$, let $\mathbb P = \displaystyle\prod_{i \in I} \mathbb P_i$ be a countable-support product where each $\mathbb P_i$ is a countable-support iteration $\left< \mathbb Q_\beta \;|\; \beta <\alpha_i \right>$ of countable length $\alpha_i$, such that each $\mathbb Q_\beta$ is either $\mathbb S$ or $\mathbb S \times \mathbb S$ (which one may depend on the generic sequence below $\beta$).  Let $x$ be a term for a function from $\omega$ to ordinals.

  Then, in $M[G]$, one of:
  \begin{enumerate}
  \item  $x \in M$;
  \item  $x \in M[G_i]$, where $G_i$ is $\mathbb P_i$-generic for some $i \in I$; or
  \item  $x$ lies above at least two minimal (nonzero) $L$-degrees of reals.
  \end{enumerate}
  This proves the technical lemma.

\begin{remark}  Unlike the previous proposition, this depends on the fact that the domain of $x$ is countable.
\end{remark}

\end{proposition}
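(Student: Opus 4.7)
The plan is to pass to an ``essential support'' of $x$ inside the product and then run a case analysis, using Proposition~\ref{prop-aboveminimal} together with the standard intersection theorem for product forcing; the three cases will correspond exactly to the three alternatives listed.

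First I would reduce to the case that $I$ is countable. Since $x$ is a function with countable domain and $\mathbb P$ is a countable-support product, a standard nice-name argument yields a countable $J \subseteq I$ with $x \in M[G \hook J]$; replacing $I$ by $J$, we may assume $I$ is countable. This is precisely where the countability of the domain of $x$ enters, as flagged in the remark. Next I would invoke the standard product intersection theorem: for $J, K \subseteq I$,
$$M[G \hook J] \cap M[G \hook K] = M[G \hook (J \cap K)].$$
Because $I$ is now countable, the family $\mathcal J = \{J \subseteq I : x \in M[G \hook J]\}$ is closed under (countable) intersections, so it has a least element $S$, and still $x \in M[G \hook S]$.

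Now split into cases on $|S|$. If $S = \emptyset$, then $x \in M$, giving~(1). If $S = \{i_0\}$, then $x \in M[G_{i_0}]$, giving~(2). Otherwise $|S| \geq 2$; pick distinct $i_0, i_1 \in S$. By minimality of $S$, $x \notin M[G \hook (S \setminus \{i_k\})]$ for $k \in \{0,1\}$, and combining this with $x \in M[G \hook S]$ and the intersection theorem yields $x \notin M[G \hook (I \setminus \{i_k\})]$. Proposition~\ref{prop-aboveminimal} then produces a condition in $G$ forcing $(G_{i_k})_0 \leq_L x$, so $(G_{i_k})_0 \leq_L x$ holds in $M[G]$ for both $k = 0, 1$. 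Since $(G_{i_0})_0$ and $(G_{i_1})_0$ are mutually generic Sacks reals over $M$, they have distinct minimal nonzero $L$-degrees, and we obtain two distinct minimal $L$-degrees below the $L$-degree of $x$, which is~(3). In particular, when the technical lemma's hypotheses exclude (1) and (2), only (3) can hold.

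The main obstacle is verifying the intersection formula in our precise setting of a countable-support product of countable-length iterations built from $\mathbb S$ and $\mathbb S \times \mathbb S$. For disjoint coordinate pieces of a product this is a standard mutual-genericity argument, and the general case reduces to the disjoint case $J \cap K = \emptyset$ via quotient forcing; but I would carefully confirm that countable-support nice names cause no mischief, so that the formula $M[G \hook J] \cap M[G \hook K] = M[G \hook (J \cap K)]$ really goes through in this iteration-of-products setup.
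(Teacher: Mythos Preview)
Your plan---isolate a least support $S$ via an intersection lemma, then split on $|S|$---is valid in outline and structurally different from the paper's argument. The paper does not pass through any intersection lemma or minimal support; instead it extends an arbitrary condition $p$ to decide whether there are zero, one, or at least two coordinates $i$ for which $p \forces x \notin M[\langle G_j : j \neq i\rangle]$, and in the first two cases carries out explicit fusion-style constructions (a decreasing sequence of conditions that successively decides $x(0), x(1), \ldots$ while shifting the name for $x$ off more and more coordinates) to force $x \in M$ or $x \in M[G_k]$ directly. Case~(3) is handled the same way in both approaches, via Proposition~\ref{prop-aboveminimal}.

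However, the two lemmas you defer to are where essentially all of the content lives, and your assessment of both is off. First, the reduction to countable $I$ is \emph{not} a nice-name argument: maximal antichains in $\mathbb P$ can have size $\omega_1$, so a nice name for a real only gives support bounded by $\omega_1$, not $\omega$. The countable-support reduction is true for this $\mathbb P$, but proving it is a fusion construction, not bookkeeping. Second, while the \emph{pairwise} formula $M[G\hook J] \cap M[G\hook K] = M[G\hook(J\cap K)]$ does follow from the standard mutual-genericity lemma applied over $M[G\hook(J\cap K)]$, what you actually need in order to extract a least $S$ is closure of $\mathcal J$ under \emph{countable} intersections, and that does not follow from the pairwise case. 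The countable-intersection statement (in particular $\bigcap_{i} M[G\hook(I\setminus\{i\})] = M$) again requires a fusion argument---and that argument is exactly the paper's case~(1), while case~(2) is its one-coordinate-remaining variant. So your outline reorganizes the proof rather than shortening it: the steps you label ``standard'' are precisely where the paper's fusion work sits.
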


\begin{proof}  Let $p$ be any condition, and extend $p$ so one of
\begin{enumerate}
\item  $p \forces (\forall i \in I) \,(x \in M[\left< G_j \;|\; j \neq i \right>])$;
\item  For some $k \in I$, we have $p \forces  \,(x \not\in M[\left< G_j \;|\; j \neq k \right>])$, and  $p \forces (\forall i \in I) \,(i \neq k \;\Longrightarrow\; x \in M[\left< G_j \;|\; j \neq i \right>])$;
\item  For some $i_0, i_1 \in I$, we have $p \forces  \,(x \not\in M[\left< G_j \;|\; j \neq i_0 \right>])$ and $p \forces  \,(x \not\in M[\left< G_j \;|\; j \neq i_1 \right>])$.
\end{enumerate}

In case (3), by Proposition~\ref{prop-aboveminimal}, $p$ forces $x$ to lie above the (nonzero) minimal $L$-degrees added by $G_{i_0}$ and $G_{i_1}$.  We show that in case (1), we can extend $p$ to force $x \in M$, and in case(2), to force $x \in M[G_{i_0}]$.

In each case, we build a decreasing sequence of conditions $\left< p_n \;|\; n < \omega \right>$, and simultaneously build an enumeration $\vec s = \left< i(n) \;|\; n < \omega \right>$ of $\displaystyle\bigcup_{n < \omega} supp(p_n)$.  For $q \in \mathbb P$, we let $q[-n]$ denote $q \hook \{i \;|\; i \not\in \{i(0), i(1), \dots i(n)\}\}$.  We let $\mathbb P[-n]$ denote $\displaystyle\prod_{i \not\in \{ i(0), \dots, i(n)\}} \mathbb P_i$.

\bigskip

For case (1), choose $i(0)$.  Since $p \forces (x \in M[\left< G_j \;|\; j \neq i_0 \right>])$, we can choose $p_0 \leq p$ and a term $x_0$ for forcing with  $\mathbb P[-0]$ such that $p_0\forces x=x_0$ and $p_0 \forces x_0(0) = \gamma_0$.

Since $x_0$ is a term for forcing with $ \mathbb P[-0]$ and $p_0 \forces (x_0 \in M[\left< G_j \;|\; j \neq i(1) \right>])$, we also have $p_0 \forces (x_0 \in M[\left< G_j \;|\; j \not\in \{i(0), i(1)\} \right>])$, and $p_0[-0]  \forces (x_0 \in M[\left< G_j \;|\; j \not\in \{i(0), i(1)\} \right>])$.
Therefore we can choose $p_1[-0] \leq p_0[-0]$ and a term $x_1$ for forcing with $\mathbb P[-1]$ such that $p_1[-0] \forces x_0 = x_1$ and $p_1[-0] \forces x_1(1) = \gamma_1$.  Expand $p_1[-0] $ to $p_1 \leq p_0$ by setting $p_1(i_0)  = p_0(i_0)$.

Then $p_1 \leq p_0 \leq p$, $p_1(i(0)) = p_0(i(0))$, and $p_1 \forces (x = x_1 \;\&\; x(0)= \gamma_0 \;\&\; x(1) = \gamma_1)$.

Inductively, assume that we have $p \geq p_0 \geq \cdots \geq p_{n-1}$, $x_0, x_1, \dots, x_{n-1}$, and $\gamma_0, \gamma_1, \dots, \gamma_{n-1}$ such that for all $m' < m < n$:
\begin{enumerate}
\item $p_m(i(m')) = p_{m'}(i(m'))$;
\item $x_m$ is a term for forcing with $\mathbb P[-m]$ and $p_m \forces x = x_m$;
\item $p_m \forces x(m) = \gamma_m$.
\end{enumerate}
Then, as before, we can choose $p_n[-(n-1)] \leq p_{n-1}[-(n-1)]$ and a term $x_n$ for forcing with $\mathbb P[-n]$ such that $p_n[-(n-1)] \forces x_{n-1} = x_n$, and $p_n[-(n-1)] \forces x_n(n) =\gamma_n$.  Expand $p_n[-(n-1)]$ to $p_n$ by, for $m < n$, setting $p_n(i(m)) = p_m(i(m))$.  This preserves the inductive hypothesis.

Finally, let $q$ be the limit of the $p_n$:  $q(i(n)) = p_n(i(n))$.  Then, for all $n$, we have $q \forces x(n) = \gamma_n$.  Therefore, $q \forces x \in M$.

\bigskip

For case (2), we combine the construction of case (1) with the construction of a fusion sequence for $\mathbb P_{k}$.  During the course of the construction, we construct $\vec s = \left< i(n) \;|\; n < \omega \right>$ enumerating $\left(\displaystyle\bigcup_{n < \omega} supp(p_n)\right) - \{k\}$.  For $q \in \displaystyle\prod_{i \neq k} \mathbb P_i$ and $r \in \mathbb P_{k},$ we let $q \tie r$ denote the condition defined by setting $(q \tie r)(i) = q(i)$ for $i \neq k$ and $(q \tie r)(k) = r$.

Choose $p_0 \leq p$ such that $p_0 \forces x = x_{\left< \right>}$, where $x_{\left< \right>}$ is a term for forcing with $\mathbb P[-0]$, and $p_0\forces x_{\left< \right>}(0) =   \gamma_{\left< \right>}$.

Inductively, assume we have constructed a decreasing sequence of conditions $\left< p_m \;|\; m < n\right>$, a collection of terms $\left< x_\sigma \;|\; \sigma \in 2^m, m < n\right>$, and a collection of ordinals $\left< \gamma_\sigma \;|\; \sigma \in 2^m, m < n\right>$ such that, for all $m' < m < n$,
\begin{enumerate}
\item $p_m(k) \leq_{m' } p_{m'}(k)$;
\item $p_m(i(m')) = p_{m'}(i(m'))$;
\item For $\sigma \in 2^m$, $x_\sigma$ is a term for forcing with $\mathbb P[-m]$;
\item For $\sigma \in 2^m$, $((p_m(k))_{(\sigma)}) \tie (p_m \hook I - \{k, i(0), \dots, i(m-1) \} ) \forces x_\sigma (m) = \gamma_\sigma$;
\item For $\tau \in 2^{m-1}$, $((p_m(k))_{(\tau\tie \ell)} )\tie (p_m \hook I - \{k, i(0), \dots, i(m-1) \}) \forces x_\tau = x_{\tau \tie \ell}$.
\end{enumerate}

We can extend this to $n$ as follows:

Let $q_0$ denote $p_{n-1}(k)$ and $r_0$ denote $p_{n-1} \hook (I - \{ k, i(0), i(1), \dots i(n-1)\})$, and enumerate $2^n $ as $ \{ \sigma_j \;|\; j = 1, \dots, b\}$.

Inductively, for $j= 1, \dots, b$, if $\sigma_j = \tau \tie \ell$, choose a condition $(r  \leq  (q_{j-1})_{(\sigma_j)} )\tie r_{j-1}$, a term $x_{\sigma_j}$ for forcing with $\mathbb P[-n]$, and an ordinal $\gamma_{\sigma_j }$ such that $r \forces x_\tau = x_{\sigma_j}$ and $r \forces x_{\sigma_j}(n) = \gamma_{\sigma_j}$.  Let $q_j = Am_{\sigma_j} (q_{j-1}, r(k))$ and $r_j = r \hook (I - \{ k, i(0), i(1), \dots i(n-1)\})$.

Define $p_n$ by $p_n(k) = q_k$, $p_n(i(j)) = p_{n-1}( i(j))$ for $j=0, \dots, n-1$, and $p_n \hook (I - \{ k, i(0), \dots, i(n-1)\} ) = r_b $.

Now, define $q \leq p$ by setting $q(k) = \bigwedge_{n \in \omega} p_n(k)$ and $q(i(n)) = p_n(i(n))$.  Then $q$ forces that, for all $n$, we have $(x(n) = \gamma_\sigma) \;\Longleftrightarrow\; ((q(k))_{(\sigma)} \in G_k)$; that is, $q$ forces $x \in M[G_k]$.

\end{proof}

The technical lemma proved here
(Lemma~\ref{lemma-technical})
is a special case of the general analysis of degrees in generic extensions by forcing notions built from Sacks forcing.  A reasonably general result, proved using the ideas in case 2 of the proof of  Proposition~\ref{prop-technical}, is stated in the following proposition.

This proposition concerns generalized iterations as defined in  Groszek and Jech \cite{GJ}.  For a well-founded partial ordering $I$, a generalized iteration $\left< \mathbb P_i \;|\; i \in I \right>$ is defined in the natural way, so that $\mathbb P_i$ is a term for a partial ordering in the generic extension by $\left< \mathbb P_j \;|\; j < i\right>$.  Generalized iterations encompass products, iterations, and various combinations of products and iterations.

\begin{proposition}
Let $M \models V=L$.  In $M$, let $I$ be any well-founded, $\omega_2$-like partial ordering, and let $\mathbb P$ be a countable-support generalized iteration along $I$ such that every $\mathbb P_i$ is forced to be either $\mathbb S$ or the trivial forcing (which one may depend on the generic below $i$).  Suppose $G$ is $\mathbb P$-generic over $M$.

If $x$ is any real in $M[G]$, then $x \equiv_L  \left< X_n \;|\; n < \omega\right>$ where, for some $\{ i(n) \;|\; n < \omega\} \subseteq I$ in $M$, and some $\{ F(n) \;|\; n < \omega\}$ in $M$ such that each $F_n$ is a finite set of finite binary sequences,
$$X_n = \begin{cases}
G_{i(n)}   &   \text{ if } (\exists \sigma \in F(n)) \, (\sigma \subset \bigoplus_{m < n} X_m)  \cr
\emptyset   &   \text{ otherwise.}  \cr
\end{cases}$$
Furthermore, for all $i \in I$,
$$( G_i \leq_L x) \;\Longleftrightarrow\; ( (\exists n) \, (G_i \leq_L X_n)) \;\Longleftrightarrow\;  ( (\exists n) \, (   X_n = G_{i(n)} \;\&\; i \leq i(n)     )   ).$$

\end{proposition}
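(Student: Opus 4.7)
The plan is to iterate the fusion-extraction argument of Case~2 in the proof of Proposition~\ref{prop-technical}, producing a recursive decomposition of $x$ into contributions from individual Sacks generics $G_{i(n)}$. Below a fixed condition $p$ forcing $x$ to be a real, I would simultaneously construct a fusion sequence $\langle p_n \mid n < \omega\rangle$, an enumeration $\langle i(n) \mid n < \omega \rangle$ of the coordinates of $I$ that will be required, finite sets $F(n)$ of finite binary sequences, and $\mathbb P$-terms $X_n$ related to the previous data by the recursive formula in the statement.

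At stage $n$, with $p_{n-1}$ and the data $\{i(m), F(m), X_m \mid m < n\}$ in hand, I apply the trichotomy of Proposition~\ref{prop-technical} to each restriction $(p_{n-1})_{(\sigma, \vec s)}$ for $\sigma \in 2^n$, relative to the coordinates of $I$ not yet enumerated. Each such restriction falls into one of three cases: (i) it forces that $x$ is already computable from $\langle X_m \mid m < n\rangle$, in which case we omit $\sigma$ from $F(n)$ and $X_n = \emptyset$ on this branch; (ii) it forces essential dependence on a unique new coordinate $i$, in which case we place $\sigma$ in $F(n)$ and take $i(n) = i$, using a global bookkeeping that routes competing demands from other branches to later stages; or (iii) it forces dependence on two or more new coordinates, which is handled by further splitting until each sub-branch falls into case (i) or case (ii), with the resulting demands distributed across stages $n, n+1, \ldots$ and the corresponding $\sigma$'s added to the appropriate $F(\cdot)$'s. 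Amalgamation of these branch-wise refinements, exactly as in the proof of Proposition~\ref{prop-aboveminimal}, assembles a single $p_n \leq_{n+1,\vec s \hook (n+1)} p_{n-1}$.

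The main obstacle is coordinating the coordinate enumeration with the branch-indexed bookkeeping while keeping each $F(n)$ finite: at any stage only finitely many branches $\sigma \in 2^n$ exist, so only finitely many distinct new coordinates can be demanded at stage $n$, and a diagonal interleaving (as in the construction of $\vec s$ in Case~2 of Proposition~\ref{prop-technical}) ensures every such demand is eventually served by some $i(m)$; the $\omega_2$-likeness of $I$ guarantees that the relevant $I$-initial segments can be handled in $M$. Once the fusion $q = \bigwedge_n p_n$ is formed, $q$ forces $x \equiv_L \langle X_n \rangle$: from $x$ we reconstruct the $X_n$'s by running the $L$-absolute fusion construction and using $x$ to select branches, and from $\langle X_n \rangle$ we reconstruct $x$ by Sacks extraction along each branch. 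The characterization $G_i \leq_L x$ iff $\exists n\,(X_n = G_{i(n)} \ \wedge\ i \leq i(n))$ then follows: the forward direction uses Proposition~\ref{prop-aboveminimal} (only demanded coordinates contribute to $x$, and other $G_j$ are forced independent of $x$ by the generic product structure), while the backward direction uses that in a generalized iteration $G_i$ is $L$-computable from $G_{i(n)}$ whenever $i \leq i(n)$.
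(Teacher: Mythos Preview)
The paper does not actually give a proof of this proposition; it only remarks that the result is ``proved using the ideas in case~2 of the proof of Proposition~\ref{prop-technical}.'' Your proposal is precisely an attempt to execute that hint, and the overall framework---iterate the case-2 fusion argument, peeling off one Sacks coordinate $i(n)$ at a time, with the finite sets $F(n)$ recording on which branches that coordinate is actually needed---is the intended one and is sound.

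That said, your description of case~(iii) is muddled. If a restriction forces that $x$ essentially depends on two or more new coordinates, no amount of ``further splitting'' will ever reduce it to case~(i) or case~(ii): the dependence is real and persists. The correct move is not to split until the multiplicity disappears, but simply to \emph{choose one} of the demanded coordinates as $i(n)$, perform the case-2 fusion step for that coordinate on the branches that demand it, and let the residual dependence surface again at stage $n+1$. Relatedly, distinct branches $\sigma,\tau\in 2^n$ may demand distinct new coordinates at the same stage; your bookkeeping must commit to a single $i(n)$, let $F(n)$ consist of exactly those branches whose demand matches $i(n)$, and defer the others. You also need to articulate the translation between the fusion bookkeeping sequences $\sigma$ and the initial segments of $\bigoplus_{m<n} X_m$ that appear in the statement---they are not literally the same objects. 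Finally, the amalgamation and $\leq_n$ machinery you invoke from Propositions~\ref{prop-aboveminimal} and~\ref{prop-technical} is developed there only for products of countable iterations; for a generalized iteration along an arbitrary well-founded $\omega_2$-like $I$ you must first set up the analogous restriction/amalgamation apparatus (this is where $\omega_2$-likeness is used, to keep each cone below a coordinate countable).
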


\end{document}